\theoremstyle{plain}
\newtheorem{theorem}{Theorem}[section]
\newtheorem{lemma}[theorem]{Lemma}
\newtheorem{corollary}[theorem]{Corollary}
\newtheorem{conjecture}[theorem]{Conjecture}
\newtheorem{proposition}[theorem]{Proposition}
\newtheorem{problem}{Problem}
\theoremstyle{definition}
\numberwithin{equation}{section}
\definecolor{blau}{rgb}{0.1,0.0,0.9}
\definecolor{gruen}{cmyk}{1.0,0.2,0.7,0.07}
\definecolor{mag}{cmyk}{0.0,0.9,0.3,0.0}
\newcommand{\black}{\color{black}}
\begin{document}

\title{Some bounds on the number of colors in interval and cyclic interval edge colorings of graphs}

\author{
%
{\sl Carl Johan Casselgren}\thanks{{\it E-mail address:}
carl.johan.casselgren@liu.se}  \\
Department of Mathematics \\
Link\"oping University \\
SE-581 83 Link\"oping, Sweden
\and
{\sl  Hrant H. Khachatrian}\thanks{{\it E-mail address:}
hrant@egern.net}\\
Department of Informatics \\ and Applied Mathematics \\
Yerevan State University \\
0025, Armenia
\and
{\sl Petros A. Petrosyan}\thanks{{\it E-mail address:}
pet\_petros@ipia.sci.am} \\
Department of Informatics \\ and Applied Mathematics \\
Yerevan State University \\
0025, Armenia
}

\maketitle

\bigskip
\noindent {\bf Abstract.} An \emph{interval $t$-coloring} of a
multigraph $G$ is a proper edge coloring with colors $1,\dots,t$
such that the colors on the edges incident to every vertex of $G$
are colored by consecutive colors. A \emph{cyclic interval $t$-coloring} 
of a
multigraph $G$ is a proper edge coloring with colors $1,\dots,t$
such that the colors on the edges incident to every vertex of $G$
are colored by consecutive colors, under the
condition that color $1$ is considered as consecutive to color $t$.
Denote by $w(G)$ ($w_{c}(G)$) and $W(G)$ ($W_{c}(G)$) the minimum
and maximum number of colors in a (cyclic) interval coloring of a
multigraph $G$, respectively. 
We present some new sharp bounds on $w(G)$ and
$W(G)$ for multigraphs $G$ satisfying various conditions. In particular,
we show that if $G$ is a $2$-connected multigraph with an interval
coloring, then $W(G)\leq 1+\left\lfloor \frac{\vert
V(G)\vert}{2}\right\rfloor(\Delta(G)-1)$. We also give several
results towards the general conjecture that $W_{c}(G)\leq \vert
V(G)\vert$ for any triangle-free graph $G$ with a cyclic interval
coloring; we establish that 
approximate versions of this conjecture hold 
for several families of graphs, and
we prove that the conjecture is true for graphs with maximum degree at most $4$.

\section{Introduction}\

   In this paper we consider graphs which are finite, undirected, and have no loops or multiple edges, and multigraphs which may contain multiple edges but no loops. We denote by $V(G)$ and $E(G)$ the sets of vertices 
and edges of a multigraph $G$, respectively, 
and by $\Delta(G)$ and $\delta(G)$ the maximum and minimum degrees 
of vertices in $G$, respectively. 
The terms and concepts that we do not define here can be found in \cite{AsratianHaggkvist,West}.

    An {\em interval coloring} (or {\em consecutive coloring})
    of a multigraph $G$ is a proper coloring of the edges by positive integers
    such that the colors on the edges incident to any vertex of $G$
    form an interval of integers, and no color class is empty.
 The notion of
interval colorings was introduced by Asratian and Kamalian
\cite{AsratianKamalian}
    (available in English as \cite{AsratianKamalian2}),
    motivated by the problem of finding compact school timetables, that is,
    timetables such that the lectures of each teacher and each class are
    scheduled at consecutive periods. 
		We denote by $\mathfrak{N}$ the set of all interval
		colorable multigraphs.
		
    All regular bipartite graphs have interval colorings, since they
    decompose into perfect matchings. Not every graph has an interval coloring,
    since a graph $G$ with an interval coloring must have
    a proper $\Delta(G)$-edge-coloring \cite{AsratianKamalian}.
    Sevastjanov \cite{Sevastjanov} proved that determining whether
    a bipartite graph has an interval coloring is $\mathcal{NP}$-complete.
    Nevertheless, trees \cite{KamalianPreprint},
    regular and complete bipartite graphs 
		\cite{KamalianPreprint,KamalianThesis,Hansen},
    grids \cite{timetabling4}, and
    outerplanar bipartite graphs \cite{timetabling3, Axenovich}
    all have interval colorings. Moreover, some families
    of $(a,b)$-biregular graphs have been proved to admit interval
    colorings \cite{HansonLotenToft,Hansen,
		CasselgrenToft,AsratianCasselgrenVandenWest,
		Pyatkin,YangLi},
    where a bipartite graph is {\em $(a,b)$-biregular} if 
		all vertices in one part have degree
    $a$ and all vertices in the other part have degree $b$.

    A {\em cyclic interval $t$-coloring} of a multigraph $G$ is a 
		proper $t$-edge-coloring $\alpha$
    such that for every vertex $v$ of $G$ the colors on the edges
    incident to $v$ either form an interval of
    integers or the set $\{1,\ldots,t\}\setminus \{\alpha(e): \text{$e$
    is incident to $v$}\}$ is an interval of integers,
		and no color class is empty.
    This notion was introduced by de Werra and Solot \cite{deWerraSolot}
		motivated by scheduling problems arising in flexible
		manufacturing systems, in particular the so-called 
		\emph{cylindrical open shop scheduling problem} \cite{deWerraSolot}. 
		We denote by $\mathfrak{N}_{c}$ the set of all cyclically 
		interval colorable multigraphs.

    Cyclic interval colorings are studied in e.g.
    \cite{KubaleNadolski,Nadolski,PetrosyanMkhit,AsratianCasselgrenPetrosyan}.
    In particular, the general question of determining whether 
		a 	graph $G$
    has a cyclic interval coloring is 
		$\mathcal{NP}$-complete \cite{KubaleNadolski}
    and there are concrete examples of connected graphs having
    no cyclic interval coloring 
		\cite{Nadolski,PetrosyanMkhit,AsratianCasselgrenPetrosyan}.
    Trivially, any  multigraph with
    an interval coloring also has a cyclic interval coloring
    with $\Delta(G)$ colors, but the converse does not hold \cite{Nadolski}.
    Graphs that have been proved to admit cyclic interval colorings
    (but not interval colorings) include all complete 
		multipartite graphs \cite{AsratianCasselgrenPetrosyan},
    all Eulerian bipartite graphs of maximum degree at most $8$
    \cite{AsratianCasselgrenPetrosyan}, and some 
		families of $(a,b)$-biregular graphs
    \cite{AsratianCasselgrenPetrosyan, 
		CasselgrenPetrosyanToft, CasselgrenToft}.

    In this paper we study upper and lower bounds on the number
    of colors in interval and cyclic interval colorings of various
    families of graphs. For a multigraph 
		$G\in \mathfrak{N}$ ($G\in \mathfrak{N}_{c}$), 
		we denote by $W(G)$ ($W_{c}(G)$) and $w(G)$ 
		($w_{c}(G)$) the maximum and the minimum
		number of colors in a (cyclic) interval coloring 
		of a multigraph $G$,
    respectively. 
		There are some previous results on these parameters in the literature. 
		In particular, Asratian and Kamalian 
		\cite{AsratianKamalian,AsratianKamalian2}
    proved the fundamental result that if $G \in \mathfrak{N}$ is a 
		triangle-free graph, 
		then $W(G)\leq \vert V(G)\vert -1$; this upper bound is
		sharp for e.g. complete bipartite graphs. Kamalian
		\cite{KamalianThesis} proved that if $G\in \mathfrak{N}$ 
		has at least two vertices, then 
		$W(G)\leq 2 \vert V(G)\vert-3$. 
		In \cite{GiaroKubaleMala}, it was noted that this
		upper bound can be slightly improved if the graph 
		has at least three vertices. 
		Petrosyan \cite{Petrosyan} proved that these upper bounds are
		asymptotically sharp by showing that for any $\epsilon >0$, there is
		a connected interval colorable graph $G$ 
		satisfying $W(G)\geq
		(2-\epsilon)\vert V(G)\vert$.
		Kamalian \cite{KamalianPreprint,KamalianThesis} 
		proved that the complete bipartite graph $K_{a,b}$ has an interval
		$t$-coloring if and only if $a+b-\gcd(a,b)\leq t\leq a+b-1$, where
		$\gcd(a,b)$ is the greatest common divisor of $a$ and $b$, and
		Petrosyan et al. \cite{Petrosyan,PetrosyanKhachatrianTananyan} 
		showed that the $n$-dimensional
		hypercube $Q_{n}$ has an interval $t$-coloring if and only if 
		$n\leq t\leq \frac{n\left(n+1\right)}{2}$.\\

   For cyclic interval colorings, Petrosyan and Mkhitaryan 
	\cite{PetrosyanMkhit}
    suggested the following:

\begin{conjecture}
\label{conj:upperbound}
    \begin{itemize}
        \item[(i)] For any triangle-free graph $G \in \mathfrak{N}_c$,
        $W_{c}(G)\leq \vert V(G)\vert$.

        \item[(ii)] For any graph $G \in \mathfrak{N}_c$ with 
				at least two vertices, $W_{c}(G)\leq 2\vert V(G)\vert-3$.
        \end{itemize}
    \end{conjecture}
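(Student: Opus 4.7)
The plan is to adapt the classical Asratian--Kamalian path-tracing argument, which gives $W(G)\le |V(G)|-1$ for triangle-free $G \in \mathfrak{N}$, to the cyclic setting.

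For part (i), suppose $\alpha$ is a cyclic interval $t$-coloring of a triangle-free $G \in \mathfrak{N}_{c}$ and that $t \ge |V(G)|+1$; I would seek a contradiction. Starting from any edge $u_{0}u_{1}$ with $\alpha(u_{0}u_{1})=1$, the idea is to build iteratively a walk $u_{0},u_{1},u_{2},\ldots$ whose consecutive edge colors $c_{i} = \alpha(u_{i-1}u_{i})$ satisfy $c_{i+1} \equiv c_{i}+1 \pmod{t}$. At each $u_{i}$, the cyclic color interval $I(u_{i})$ on its incident edges contains $c_{i}$, and provided $c_{i}$ is not the ``top'' color of $I(u_{i})$ in the walking direction, the walk extends via the unique incident edge of color $c_{i}+1 \pmod{t}$. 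Using triangle-freeness, I would argue that the walk visits pairwise distinct vertices (ruling out short returns by a local analysis at each vertex), and that it can be continued until the color sequence has traversed all of $\{1,\ldots,t\}$ once, forcing $|V(G)| \ge t$ and contradicting the assumption.

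The main obstacle is the cyclic wrap-around: the walk stalls whenever $c_{i}$ is the top color of $I(u_{i})$, and in contrast to the non-cyclic Asratian--Kamalian setting the walk ought to keep progressing around the color cycle rather than terminate. One option is to choose the starting vertex and the walking direction so as to avoid early stalling; another is to run two walks from a common start, one color-increasing and one color-decreasing, and splice them into a single closed walk of length $\ge t$. A careful case analysis of when stalling can occur, combined with the triangle-free hypothesis to prevent two walks from colliding prematurely, should close the argument.

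For part (ii), without triangle-freeness the analogous walk may revisit vertices, but in the spirit of Kamalian's proof of $W(G)\le 2|V(G)|-3$ for ordinary interval colorings each vertex should be shown to be visitable at most twice along a color-monotone walk, yielding $t\le 2|V(G)|-3$ after accounting for the behaviour at the wrap. In both parts the essential difficulty is that the cyclic wrap destroys the strict monotonicity on which the classical argument relies; I suspect that a full resolution requires more than the path method alone, which is consistent with the authors' choice (as announced in the abstract) to establish only approximate or partial versions of the conjecture in general, proving it fully only for graphs with $\Delta(G)\le 4$.
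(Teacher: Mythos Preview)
This statement is presented in the paper as an open \emph{conjecture}; the paper does not prove it. What the paper does prove are approximate and special-case versions: $W_c(G)\le |V(G)|+\Delta(G)-3$ for triangle-free $G$ with $\Delta(G)\ge 3$, the general bound $W_c(G)\le \tfrac{\sqrt{3}+1}{2}(|V(G)|-1)$, and part~(i) in full only when $\Delta(G)\le 4$. You acknowledge this yourself in your last paragraph, so your proposal is really a heuristic plan rather than a proof, and there is no proof in the paper to compare it against.

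The plan has a genuine gap that you name but do not close: the color-increasing walk stalls at $u_i$ whenever $c_i$ is the top color of the cyclic interval $S(u_i,\alpha)$, and nothing in your outline prevents this from happening after only a few steps. Your suggested remedies---a clever choice of starting point, or splicing an increasing walk with a decreasing one---are not carried out, and there is no indication that a ``careful case analysis'' can make them work; this obstruction is precisely why the conjecture is open. There is a second gap in the distinctness claim: triangle-freeness rules out $u_i=u_{i+3}$ but says nothing about $u_i=u_j$ for $j\ge i+4$, so ``ruling out short returns'' does not by itself give pairwise distinct vertices. For contrast, the paper's partial results bypass walks entirely: one rotates the coloring so that few vertices are ``cyclic'' (incident with both colors $1$ and $t$), splits each such vertex in two to obtain an ordinary interval coloring of a slightly larger graph, and then applies the known bound $W\le |V|-1$ there. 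A pigeonhole count (the Translation Lemma) controls the number of splits and yields the approximate bounds; the $\Delta(G)\le 4$ case is a hands-on argument exploiting that at most two vertices need to be split.
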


		If true, then these upper bounds are sharp \cite{PetrosyanMkhit}.
    Using results on upper bounds for $W(G)$
    \cite{AsratianKamalian, KamalianThesis,GiaroKubaleMala}, Petrosyan and
    Mkhitaryan \cite{PetrosyanMkhit} proved
    that 
		\begin{itemize}
		
		\item $W_{c}(G)\leq \vert V(G)\vert +\Delta(G)-2$ 
		if  $G\in \mathfrak{N}_{c}$ is a triangle-free graph,
		and 
		
		\item $W_{c}(G)\leq 2\vert V(G)\vert + \Delta(G)-5$
    for any graph  $G\in \mathfrak{N}_{c}$ with at least three vertices.
		
		\end{itemize}

    In this paper we present some new general upper and lower bounds
    on the number of colors in interval and cyclic interval colorings of graphs.

    For cyclic interval colorings, we prove several results related 
		to Conjecture \ref{conj:upperbound}.
    In particular, we give improvements of the above-mentioned
		general upper 
		bounds on $W_{c}(G)$ by Petrosyan and Mkhitaryan,
		and we show that 
		approximate versions of the conjecture 
		hold for several families of graphs.
    We also prove the new general upper bound
		that for any triangle-free graph $G \in \mathfrak{N}_c$,
		$W_{c}(G)\leq \frac{\sqrt{3}+1}{2}(\vert V(G)\vert-1)$, which
		is an improvement of the bound proved by Petrosyan et al.
		\cite{PetrosyanMkhit} for graphs with large maximum degree.
		These results are proved in Section 3, where we also
		prove that Conjecture \ref{conj:upperbound} (i) is true for graphs with 
		maximum degree at most $4$.

    For interval colorings, we prove that
    if $G$ is a $2$-connected multigraph and $G\in \mathfrak{N}$, 
		then
    $W(G)\leq 1+\left\lfloor \frac{|V(G)|}{2}\right\rfloor(\Delta(G)-1)$.
    This result is proved in Section 4, where we also show 
		that this upper bound is sharp, and give some related results. 
		In Section 5, we obtain some lower bounds on
    $w(G)$ for multigraphs $G \in \mathfrak{N}$.
		We show that if $G$ is an interval colorable multigraph, then
    $w(G) \geq \left\lceil\frac{|V(G)|}
		{2\alpha^{\prime}(G)}\right\rceil\delta(G)$, where
    $\alpha^{\prime}(G)$ is the size of a maximum matching
		in $G$. In particular, this implies that if $G$ has no perfect matching
    and $G\in \mathfrak{N}$, then $w(G)\geq \max\{\Delta(G),2\delta(G)\}$. Additionally, we prove that
    the same conclusion holds under the assumption that
    all vertex degrees in $G$ are odd, $G\in \mathfrak{N}$ and $\vert E(G)\vert - \frac{\vert V(G)\vert}{2}$
    is odd. All these lower bounds are sharp.
\bigskip


\subsection{Notation and preliminary results}\

In this section we introduce some terminology and notation,
and state some auxiliary results.

 The set of neighbors of a vertex $v$ in $G$
is denoted by $N_{G}(v)$. The degree of a vertex $v\in V(G)$ is
denoted by $d_{G}(v)$ (or just $d(v)$), the maximum degree of
vertices in $G$ by $\Delta(G)$, the minimum degree of
vertices in $G$ by $\delta(G)$, and the average degree of $G$ by
$d(G)$.

A multigraph $G$ is \emph{even} (\emph{odd}) if the degree of every
vertex of $G$ is even (odd). $G$ is \emph{Eulerian} if
it has a closed trail containing every edge of $G$. For two distinct
vertices $u$ and $v$ of a multigraph $G$, let $E(uv)$ denote the set
of all edges of $G$ joining $u$ with $v$.

The diameter of $G$, i.e. the greatest distance between any pair of
vertices in $G$, is denoted by $\mathrm{diam}(G)$, and the
circumference of $G$, i.e. the length of a longest cycle in $G$, is
denoted by $c(G)$. We denote by $\alpha^{\prime}(G)$ the size of a maximum matching in $G$, and by $\chi^{\prime}(G)$ the
chromatic index of $G$. 

If $\alpha $ is a proper edge coloring of $G$ and $v\in V(G)$, then
$S_{G}\left(v,\alpha \right)$ (or $S\left(v,\alpha \right)$) denotes
the set of colors appearing on edges incident to $v$. The smallest
and largest colors of $S\left(v,\alpha \right)$ are denoted by
$\underline S\left(v,\alpha \right)$ and $\overline S\left(v,\alpha
\right)$, respectively.\\

We shall use the following theorem due to Asratian and Kamalian
\cite{AsratianKamalian,AsratianKamalian2}.

\begin{theorem}
\label{th:AsratianKamalian} If $G$ is a triangle-free graph and
$G\in \mathfrak{N}$, then
\begin{center}
$W(G)\leq \vert V(G)\vert -1$.
\end{center}
\end{theorem}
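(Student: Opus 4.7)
Let $\alpha$ be an interval $t$-coloring of $G$ attaining $t = W(G)$, and set $n = |V(G)|$. My plan is to combine a decreasing-color walk in $G$ with the triangle-free degree estimate $d(u)+d(v) \le n$ for adjacent $u,v$ (which follows because $N_G(u) \cap N_G(v) = \emptyset$ and $u, v \in N_G(u) \cup N_G(v) \subseteq V(G)$).

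First, pick an edge $xy \in E(G)$ with $\alpha(xy) = t$. Since $t$ is the largest color used and both $S(x)$ and $S(y)$ are intervals of integers containing $t$, I have $\overline{S}(x) = \overline{S}(y) = t$, so $\underline{S}(x) = t - d(x) + 1$ and $\underline{S}(y) = t - d(y) + 1$. If $\underline{S}(x) = 1$ or $\underline{S}(y) = 1$, then $t$ equals $d(x)$ or $d(y)$, and so $t \le \Delta(G) \le n - 1$, giving the conclusion at once. Otherwise, I build a walk $v_0 := x$, $v_1$, $v_2$, \ldots, $v_m$ by letting $v_{j+1}$ be the endpoint opposite $v_j$ of the edge incident to $v_j$ with color $\underline{S}(v_j)$, stopping once $\underline{S}(v_m) = 1$. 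Because the color $\underline{S}(v_j)$ on the edge $v_j v_{j+1}$ belongs to $S(v_{j+1})$ and the palette $S(v_{j+1})$ is an integer interval of length $d(v_{j+1})$, I obtain $\underline{S}(v_j) - \underline{S}(v_{j+1}) \le d(v_{j+1}) - 1$, and summing telescopically yields
\[
t \;\le\; d(v_0) + \sum_{j=1}^{m}\bigl(d(v_j) - 1\bigr) \;=\; \sum_{j=0}^{m} d(v_j) \;-\; m .
\]

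The main obstacle is turning this telescoped bound into the sharp global bound $t \le n - 1$. A blind per-edge application of $d(v_j)+d(v_{j+1}) \le n$ only gives something of order $mn$, so one must use the triangle-free condition more globally: either arrange (by a careful initial choice of $x, y$ and of the direction of the walk) that $m \le 1$, which is equivalent to exhibiting an edge $uv$ with $\underline{S}(u) = 1$ and $\overline{S}(v) = t$ and which directly yields $t \le d(u) + d(v) - 1 \le n - 1$; or establish pairwise distinctness of the $v_j$ using triangle-freeness to rule out shortcuts, and then observe that the vertices of the walk together with the remaining neighbors of its endpoints already exhaust so many vertices of $G$ that $t$ cannot exceed $n - 1$. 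The hard part, then, is combining distinctness along the walk with the degree-sum estimate at the right pair of vertices so that the factor $m$ is absorbed rather than multiplied.
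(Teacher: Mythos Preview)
First, note that the paper does \emph{not} prove Theorem~\ref{th:AsratianKamalian}: it is quoted as a known result of Asratian and Kamalian with references, and is then used as a tool (for example, in the short proof of Theorem~\ref{th:Kamalian} and in the proofs of Theorems~\ref{th:degree3} and~\ref{mytheorem1}). So there is no ``paper's own proof'' to compare against here; the benchmark is the original Asratian--Kamalian argument.

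Your proposal is a reasonable \emph{plan} but not a proof: you yourself identify the crux (``the hard part'') and leave it open. Let me be concrete about the two routes you suggest.

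\textbf{Route 1 fails outright.} You propose to arrange $m\le 1$, i.e.\ to exhibit an edge $uv$ with $\underline{S}(u)=1$ and $\overline{S}(v)=t$, and then conclude $t\le d(u)+d(v)-1\le n-1$. Such an edge need not exist. Take the path $P_5=v_1v_2v_3v_4v_5$ with $\alpha(v_iv_{i+1})=i$; here $t=4$, the vertices with $\underline{S}=1$ are $\{v_1,v_2\}$, those with $\overline{S}=t$ are $\{v_4,v_5\}$, and there is no edge between these sets. More generally, any path $P_n$ with $n\ge 5$ and the natural coloring is a counterexample. So this route cannot be salvaged by a ``careful initial choice''.

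\textbf{Route 2 is the right direction, but the missing step is substantial.} Your telescoped inequality $t\le \sum_{j=0}^{m} d(v_j)-m$ is correct, and strict decrease of the colors $\underline S(v_j)$ along the walk does force the $v_j$ to be pairwise distinct (once you stop the walk the first time the minimum fails to drop; otherwise the walk oscillates and never reaches $\underline S=1$). But distinctness of the $v_j$ alone is far from enough: for instance, on the shortest path $a,c,b$ in $K_{2,3}$ one gets $\sum d(v_j)-m=8-2=6>n-1=4$. The Asratian--Kamalian proof does not merely bound $\sum d(v_j)$; it uses triangle-freeness to show that suitably chosen \emph{neighbourhoods} along the walk are pairwise disjoint (adjacent vertices share no neighbour, and the strictly decreasing edge-colours prevent revisits), and it runs the construction from \emph{both} endpoints of the colour-$t$ edge so that the two descending walks together with their ``new'' neighbours account for at least $t+1$ distinct vertices. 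That disjointness-of-neighbourhoods step is precisely the idea your outline is missing, and without it the factor $m$ cannot be absorbed.

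In short: the setup is fine, but neither of your two proposed closures works as stated; to finish you need the neighbourhood-disjointness argument from the original proof (or an equivalent counting device), not just the per-edge bound $d(u)+d(v)\le n$.
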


In particular, from this result it follows that if $G$ is a
bipartite graph and $G\in \mathfrak{N}$, then 
$W(G)\leq \vert V(G)\vert -1$.\\

For general graphs, Kamalian \cite{KamalianThesis} proved the following.

\begin{theorem}
\label{th:Kamalian} If $G$ is a graph with at least two
vertices and $G\in \mathfrak{N}$, then
\begin{center}
$W(G)\leq 2\vert V(G)\vert -3$.
\end{center}
\end{theorem}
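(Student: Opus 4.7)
\smallskip

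The plan is to fix an interval $t$-coloring $\alpha$ of $G$ realizing $t=W(G)$, set $n=|V(G)|\ge 2$, and write $S(v,\alpha)=[p(v),q(v)]$ for every vertex $v$, where $p(v)=\underline S(v,\alpha)$, $q(v)=\overline S(v,\alpha)$ and $q(v)=p(v)+d_G(v)-1$. Since no color class is empty, I can pick a vertex $v_1$ that is incident to an edge of color $1$ (so $p(v_1)=1$) and a vertex $v_2$ incident to an edge of color $t$ (so $q(v_2)=t$). If $v_1=v_2$, the interval at this single vertex already contains both $1$ and $t$, so $t\le d_G(v_1)\le n-1\le 2n-3$ and we are done. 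Otherwise, take a shortest path $v_1=x_0,x_1,\ldots,x_k=v_2$ in $G$ and set $c_i=\alpha(x_{i-1}x_i)$.

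The central observation is that $c_i$ and $c_{i+1}$ both belong to the interval $S(x_i,\alpha)$ of length $d_G(x_i)$, so $|c_{i+1}-c_i|\le d_G(x_i)-1$. Combined with $c_1\le d_G(v_1)$ and $c_k\ge t-d_G(v_2)+1$, telescoping gives the key inequality
\[
t\;\le\;\sum_{i=0}^{k} d_G(x_i)\;-\;k.
\]
When $k=1$, i.e.\ $v_1v_2\in E(G)$, this at once gives $t\le d_G(v_1)+d_G(v_2)-1\le 2(n-1)-1=2n-3$, which is exactly the desired bound.

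For $k\ge 2$, the above needs to be sharpened; this is where the main obstacle lies. Since $P$ is a shortest path, no two non-consecutive $x_i$'s are adjacent, and every $y\notin V(P)$ is adjacent to at most three consecutive $x_i$'s (otherwise it would create a shortcut). Counting edge-endpoints on $V(P)$ yields $\sum_{i=0}^k d_G(x_i)\le 2k+3(n-k-1)=3n-k-3$, so that $t\le 3n-2k-3$; this already gives $t\le 2n-3$ whenever $k\ge\lceil n/2\rceil$. The trickier regime is $2\le k<n/2$, where neither the degree bound nor the path-length bound alone suffices. To close this gap I would argue by induction on $n$: the base case $n=2$ is just $K_2$ with $W=1=2\cdot 2-3$, and in the inductive step one locates a pendant vertex whose edge carries color $1$ or $t$ (which always exists when $t$ is close to the claimed bound, since the interval at such a vertex is a singleton inside $[1,t]$), removes it, and observes that the remaining graph still admits an interval coloring with one fewer color on one fewer vertex, to which the inductive hypothesis applies. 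The hardest step is showing that in the problematic regime $2\le k<n/2$ such a removable extremal pendant can always be found (or otherwise deriving a direct contradiction from the interval structure forcing $p(x_1)\le d(v_1)$ and $q(x_1)\ge t-d(v_2)+1$ along the path).
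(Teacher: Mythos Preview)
Your path-telescoping argument is the standard route to the Asratian--Kamalian diameter bound, and it cleanly handles the cases $k\le 1$ and $k\ge n/2$. The gap is exactly where you say it is, and it is genuine: the inductive step via removal of a pendant vertex cannot be carried out in general, because a graph with an interval coloring need not have \emph{any} pendant vertex. For a concrete obstruction, take three vertices $v_1,x,v_2$ forming a path of length~$2$ (with $v_1v_2\notin E$) and attach $n-3$ further vertices, each joined to all of $v_1,x,v_2$. Then every vertex has degree at least~$3$, the shortest $v_1$--$v_2$ path has $k=2$, and your bound $t\le 3n-2k-3=3n-7$ exceeds $2n-3$ for all $n\ge 5$. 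No pendant is available to remove, so the induction stalls. Your parenthetical justification (``the interval at such a vertex is a singleton'') presupposes the existence of a degree-$1$ vertex rather than deriving it.

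The paper's proof avoids this regime entirely by a reduction rather than a direct count. It builds the bipartite double cover $H$ of $G$ (replace each $v_i$ by a pair $u_i,w_i$; replace each edge $v_iv_j$ by the two edges $u_iw_j,u_jw_i$) together with the extra perfect matching $\{u_iw_i\}$. The interval $W(G)$-coloring $\alpha$ of $G$ lifts, after a shift and assigning $u_iw_i$ the color $\overline S(v_i,\alpha)+2$, to an interval $(W(G)+2)$-coloring of $H$. Since $H$ is bipartite (hence triangle-free) with $2|V(G)|$ vertices, Theorem~\ref{th:AsratianKamalian} gives $W(G)+2\le 2|V(G)|-1$, i.e.\ $W(G)\le 2|V(G)|-3$. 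The point is that the factor~$2$ in the bound arises from doubling the vertex set, not from bounding degrees along a shortest path; this sidesteps precisely the $2\le k<n/2$ range that your approach cannot reach.
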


Note that the upper bound in Theorem \ref{th:Kamalian} is sharp for
$K_{2}$, but if $G\neq K_{2}$, then this upper bound can be
improved.

\begin{theorem}
\label{th:GiaroKubale} \cite{GiaroKubaleMala}  If $G$ is a graph
with at least three vertices and $G\in \mathfrak{N}$, then
\begin{center}
$W(G)\leq 2\vert V(G)\vert -4$.
\end{center}
\end{theorem}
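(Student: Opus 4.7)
We aim to refine Kamalian's bound (Theorem~\ref{th:Kamalian}) by one unit under the hypothesis $n := |V(G)| \ge 3$, arguing by contradiction. Suppose $G \in \mathfrak{N}$ admits an interval $t$-coloring $\alpha$ with $t = 2n-3$. Choose $u \in V(G)$ with $\underline{S}(u,\alpha) = 1$ and $v \in V(G)$ with $\overline{S}(v,\alpha) = t$; since $d(u) \le n-1 < 2n-3$ when $n \ge 3$, we have $u \neq v$. Let $P : u = u_0, u_1, \dots, u_p = v$ be a shortest $u$-$v$ path, so $p \ge 1$. The chain inequality behind Theorem~\ref{th:Kamalian},
\[
t \;=\; \overline{S}(u_p, \alpha) \;\le\; d(u_0) + \sum_{i=1}^{p}\bigl(d(u_i) - 1\bigr) \;=\; \sum_{i=0}^{p} d(u_i) - p,
\]
follows by iterating $\overline{S}(u_{i+1}, \alpha) \le \overline{S}(u_i, \alpha) + d(u_{i+1}) - 1$, which holds because $S(u_{i+1}, \alpha)$ is an interval containing the color of $u_i u_{i+1} \in S(u_i, \alpha)$.

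The heart of the proof is the case $p = 1$. Here the chain inequality becomes $t \le d(u) + d(v) - 1$; combined with $d(u), d(v) \le n-1$ and $t = 2n-3$, this forces $d(u) = d(v) = n-1$, so both $u$ and $v$ are universal in $G$, $S(u,\alpha) = [1, n-1]$, $S(v,\alpha) = [n-1, 2n-3]$, and $\alpha(uv) = n-1$. For each $w \in V(G) \setminus \{u, v\}$, set $a_w = \alpha(uw)$ and $b_w = \alpha(vw)$; the $n-2$ edges from $u$ to such $w$'s use the colors $1, 2, \dots, n-2$ bijectively, and similarly the $b_w$'s exhaust $n, n+1, \dots, 2n-3$. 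Since $S(w,\alpha)$ is an interval of length $d(w) \le n-1$ containing both $a_w$ and $b_w$ (with $a_w \le n-2 < n \le b_w$), we have $b_w - a_w \le d(w) - 1 \le n-2$, and summing over the $n-2$ choices of $w$ gives
\[
(n-1)(n-2) \;=\; \sum_w b_w - \sum_w a_w \;\le\; (n-2)(n-2),
\]
which fails for $n \ge 3$: a contradiction.

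\textbf{Main obstacle.} The case $p \ge 2$ is the delicate one. Although we now enjoy the tighter degree restrictions $d(u_0), d(u_p) \le n-2$ (as $u_0 \not\sim u_p$), the chain inequality alone only yields $t \le p(n-2) + n-3$, which for $p=2$ and $n \ge 4$ still permits $t = 2n-3$. My plan to close this gap is to choose $P$ so as to maximize $\sum_i d(u_i)$ (forcing near-tightness in the chain) and then rerun the counting argument of the $p=1$ case on off-path vertices $w \in V(G) \setminus V(P)$ adjacent to both $u_0$ and $u_p$: such a $w$ exists by pigeonhole as soon as $(d(u_0)-1) + (d(u_p)-1) > n - p - 1$, which the near-tightness of the chain enforces. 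For each such $w$, the interval $S(w,\alpha)$ must contain both $\alpha(u_0 w) \le d(u_0)$ and $\alpha(u_p w) \ge t - d(u_p) + 1$, so $d(w) \ge t - d(u_0) - d(u_p) + 2$. Summing these inequalities and exploiting that the $\alpha(u_0 w)$'s and $\alpha(u_p w)$'s take distinct values in prescribed color ranges at $u_0$ and $u_p$ should yield an arithmetic contradiction analogous to the $p = 1$ case.
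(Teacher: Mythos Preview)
The paper does not give its own proof of Theorem~\ref{th:GiaroKubale}; it is quoted from \cite{GiaroKubaleMala} as a known sharpening of Theorem~\ref{th:Kamalian}. (The paper does supply, in Section~3, a short proof of the weaker Theorem~\ref{th:Kamalian} via the bipartite double cover, but that argument yields only $W(G)\le 2|V(G)|-3$ and is not pushed to $2|V(G)|-4$.) So there is no in-paper proof to compare your proposal against; I can only assess the proposal on its own.

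Your case $p=1$ is correct and clean: forcing $d(u)=d(v)=n-1$, reading off $S(u,\alpha)=[1,n-1]$, $S(v,\alpha)=[n-1,2n-3]$, and then summing $b_w-a_w$ over the $n-2$ remaining vertices gives exactly the contradiction $(n-1)(n-2)\le (n-2)^2$ you state.

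The case $p\ge 2$, however, has a genuine gap. Your plan relies on finding off-path vertices $w$ adjacent to \emph{both} endpoints $u_0$ and $u_p$. But for $p\ge 3$ with $P$ a shortest $u$--$v$ path, no such $w$ can exist: a common neighbor of $u_0$ and $u_p$ would give a $u_0$--$u_p$ path of length $2<p$, contradicting minimality of $P$. So for $p\ge 3$ your pigeonhole step is vacuous, and the chain inequality by itself is far too weak there (using $d(u_0),d(u_p)\le n-p$ and $d(u_i)\le n-p+1$ for interior $i$, it only gives $t\le (p+1)(n-p)-1$, which beats $2n-3$ essentially only when $P$ is Hamiltonian). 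Even for $p=2$ the ``summing'' argument is only sketched: unlike the $p=1$ case, $u_0$ and $u_2$ are not universal, so you do not know \emph{which} colors among $[1,d(u_0)]$ and $[t-d(u_2)+1,t]$ the common neighbors pick up, and the lower bound on $\sum_w(b_w-a_w)$ requires more care. To salvage the approach you would need either a separate mechanism for $p\ge 3$ (for instance, telescoping a $p=2$-type estimate along overlapping triples $u_i,u_{i+1},u_{i+2}$) or an argument that one may always choose the extremal vertices $u,v$ at distance at most $2$; neither is supplied.
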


We shall also use the following result due to Asratian and Kamalian
\cite{AsratianKamalian,AsratianKamalian2}.

\begin{proposition}
\label{proposition} If $G\in \mathfrak{N}$, then
$\chi^{\prime}(G)=\Delta(G)$. Moreover, if $G$ is a regular
multigraph, then $G\in \mathfrak{N}$ if and only if
$\chi^{\prime}(G)=\Delta(G)$.
\end{proposition}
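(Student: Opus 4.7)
The plan is to deduce the first assertion by a color-reduction trick modulo $\Delta(G)$, and then handle the regular case by two short, almost tautological observations.

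For the first part, suppose $\alpha$ is an interval $t$-coloring of $G$ for some $t\geq \Delta(G)$. I would define a new edge coloring $\beta\colon E(G)\to\{1,\ldots,\Delta(G)\}$ by
\begin{equation*}
\beta(e) \;=\; \bigl((\alpha(e)-1)\bmod \Delta(G)\bigr)+1.
\end{equation*}
The main point to verify is that $\beta$ is proper. Fix any vertex $v$; then $S(v,\alpha)$ is an interval of $d(v)\leq \Delta(G)$ consecutive integers. Since any set of at most $\Delta(G)$ consecutive integers has pairwise distinct residues modulo $\Delta(G)$, the edges at $v$ receive pairwise distinct $\beta$-values, so $\beta$ is a proper edge coloring with at most $\Delta(G)$ colors. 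Combined with the trivial lower bound $\chi^{\prime}(G)\geq \Delta(G)$, this gives $\chi^{\prime}(G)=\Delta(G)$.

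For the ``moreover'' part, the forward direction is immediate from the first part. For the converse, assume $G$ is $\Delta(G)$-regular and $\chi^{\prime}(G)=\Delta(G)$, and take a proper $\Delta(G)$-edge-coloring $\gamma$. Since every vertex $v$ has degree exactly $\Delta(G)$ and the $\Delta(G)$ edges at $v$ receive pairwise distinct colors from $\{1,\ldots,\Delta(G)\}$, we have $S(v,\gamma)=\{1,\ldots,\Delta(G)\}$, which is an interval. Hence $\gamma$ is an interval $\Delta(G)$-coloring, and $G\in\mathfrak{N}$.

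I do not foresee a real obstacle here: the only subtle point is checking that the modular reduction collapses colors without creating conflicts, and this follows directly from the fact that intervals of length at most $\Delta(G)$ are injective modulo $\Delta(G)$. One should also note in passing that all $\Delta(G)$ color classes of $\beta$ are nonempty, since at any vertex $v$ of maximum degree the interval $S(v,\alpha)$ has length exactly $\Delta(G)$ and therefore hits every residue class; this ensures $\beta$ genuinely uses $\Delta(G)$ colors, matching the convention adopted in the paper.
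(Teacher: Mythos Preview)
Your argument is correct in every detail: the modular reduction $\beta(e)=((\alpha(e)-1)\bmod\Delta(G))+1$ is proper because each spectrum $S(v,\alpha)$ is an interval of length at most $\Delta(G)$, and the regular converse is exactly as you describe. Note, however, that the paper does not supply its own proof of this proposition; it is quoted as a preliminary result due to Asratian and Kamalian \cite{AsratianKamalian,AsratianKamalian2}, so there is nothing in the paper to compare your approach against. What you have written is in fact the standard proof from those references. Your closing remark about nonemptiness of color classes is harmless but slightly misplaced: it is not needed for the inequality $\chi'(G)\leq\Delta(G)$ (where $\beta$ is merely a proper coloring, not an interval coloring), though the analogous observation \emph{is} implicitly needed in the converse direction to confirm that $\gamma$ is a genuine interval $\Delta(G)$-coloring with no empty class, and there it is immediate since every vertex sees all $\Delta(G)$ colors.
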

\bigskip

\section{Cyclic interval colorings}\

In this section we prove several results related to Conjecture
\ref{conj:upperbound}. Petrosyan and Mkhitaryan
\cite{PetrosyanMkhit} proved the following general bounds.

\begin{theorem}
\label{th:PetrosyanMkhit}
\begin{itemize}
        \item[(i)] If $G$ is a connected triangle-free graph with at
least two vertices and $G\in \mathfrak{N}_{c}$, then $W_{c}(G)\leq
\vert V(G)\vert +\Delta(G)-2$.

        \item[(ii)] If $G$ is a connected graph with at least three
vertices and $G\in \mathfrak{N}_{c}$, then $W_{c}(G)\leq 2\vert
V(G)\vert +\Delta(G)-5$.
        \end{itemize}
        \end{theorem}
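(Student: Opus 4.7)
The plan is to reduce any cyclic interval $t$-coloring of $G$ to an ordinary interval $t$-coloring of a slightly enlarged graph $G'$, by cutting the colour cycle at a well-chosen point and splitting every vertex whose colour arc is broken by the cut. The two bounds will then follow from Theorems~\ref{th:AsratianKamalian} and~\ref{th:GiaroKubale} applied to $G'$.

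Let $\alpha$ be a cyclic interval $t$-coloring of $G$ with $t=W_c(G)$, and set $\Delta=\Delta(G)$. For each $c\in\{1,\ldots,t\}$ let $W(c)$ count those $u\in V(G)$ with both $c$ and $c+1\pmod t$ in $S(u,\alpha)$; equivalently, $W(c)$ is the number of vertices whose colour arc would be broken by cutting the cycle between $c$ and $c+1$. A cyclic arc of length $d_G(u)<t$ contains exactly $d_G(u)-1$ pairs of cyclically consecutive colours, so double counting gives
\[
\sum_{c=1}^{t} W(c) \;=\; \sum_{u\in V(G)} (d_G(u)-1) \;=\; 2|E(G)|-|V(G)| \;\le\; |V(G)|(\Delta-1),
\]
and averaging over the $t$ cut positions produces some $c^*$ with $W(c^*)\le |V(G)|(\Delta-1)/t$.

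Relabel the colours cyclically so that $c^*$ becomes the break between $t$ and $1$. Every vertex not counted in $W(c^*)$ then has colour set an ordinary (non-wrapping) interval in $[1,t]$, while each of the $W(c^*)$ \emph{wrapping} vertices $u$ has colour set of the form $[1,b_u]\cup[a_u,t]$ with $b_u<a_u-1$. I replace each such $u$ by two non-adjacent copies $u',u''$ absorbing respectively the edges of colours $[1,b_u]$ and $[a_u,t]$. The resulting graph $G'$ satisfies $|V(G')|=|V(G)|+W(c^*)$, inherits from $\alpha$ a genuine interval $t$-coloring (so $G'\in\mathfrak{N}$), and remains triangle-free whenever $G$ is, since splitting a vertex into two non-adjacent copies cannot create a triangle.

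For part~(i), assume toward a contradiction that $t\ge|V(G)|+\Delta-1$. The averaging bound then forces $W(c^*)<\Delta-1$, so $W(c^*)\le\Delta-2$, and Theorem~\ref{th:AsratianKamalian} applied to $G'$ yields $t\le|V(G')|-1\le|V(G)|+\Delta-3$, a contradiction. For part~(ii), assume $t\ge 2|V(G)|+\Delta-4$ (with $|V(G)|\ge 3$); the same averaging now gives $W(c^*)<\Delta/2$ and hence $W(c^*)\le\lfloor(\Delta-1)/2\rfloor$, whereupon Theorem~\ref{th:GiaroKubale} applied to $G'$ gives $t\le 2|V(G')|-4\le 2|V(G)|+\Delta-5$, again a contradiction. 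The main obstacle will be the arithmetic: checking that the strict inequalities $|V(G)|(\Delta-1)/t<\Delta-1$ and $|V(G)|(\Delta-1)/t<\Delta/2$ really hold under the respective lower bounds on $t$, so that integrality of $W(c^*)$ pushes it below the critical threshold; a secondary concern is the routine but careful verification that the split coloring on $G'$ is genuinely an interval $t$-coloring (each split half receives a non-empty interval, and all $t$ colour classes remain non-empty), together with disposal of the degenerate cases $\Delta\le 1$ or $t\le\Delta$ by direct inspection.
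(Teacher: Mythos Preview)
The paper does not itself prove Theorem~\ref{th:PetrosyanMkhit}; it is quoted from~\cite{PetrosyanMkhit}. Your argument is correct and self-contained. The splitting construction you use is precisely what the paper later formalises as the Translation Lemma (Lemma~\ref{ourlemma}), although the paper invokes that lemma only for the quadratic bounds of Theorems~\ref{mytheorem1}--\ref{mytheorem3}, not for Theorem~\ref{th:PetrosyanMkhit} itself.

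For comparison, the paper's own proofs of the slight sharpenings in Theorems~\ref{th:degree3} and~\ref{th:degree3general} take a different route: instead of averaging over cut positions and splitting vertices, they observe that once $t>|V(G)|$ some colour is the \emph{starting} colour of no vertex's arc, rotate so that this colour is $t$, and then simply delete all edges coloured $1,\dots,\Delta-2$. Every surviving vertex then carries a genuine interval of colours on the \emph{same} vertex set, and Theorems~\ref{th:AsratianKamalian} and~\ref{th:GiaroKubale} apply directly to this subgraph. That edge-deletion approach is shorter and avoids enlarging the graph, but it only engages once $t>|V(G)|$ has already been assumed. Your averaging-and-splitting method is the more flexible device, and is exactly the machinery the paper needs later for the bounds involving the edge count.
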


Part (i) of the above theorem implies that Conjecture
\ref{conj:upperbound} (i) holds for any graph with maximum degree at
most $2$. We give a slight improvement for graphs with maximum
degree at least $3$ of their general bound.

\begin{theorem}
\label{th:degree3}
        For any triangle-free graph $G \in \mathfrak{N}_c$
        with $\Delta(G) \geq 3$,
        $W_c(G) \leq |V(G)| + \Delta(G) -3$.
\end{theorem}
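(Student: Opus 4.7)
The plan is to refine the argument of Petrosyan and Mkhitaryan in Theorem~\ref{th:PetrosyanMkhit}(i) by locating a \emph{second} isolated vertex in the auxiliary interval-colored subgraph that they extract.

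Suppose for contradiction that $t := W_c(G) \geq |V(G)| + \Delta - 2$, where $\Delta = \Delta(G)$, and fix a cyclic interval $t$-coloring $\alpha$ of $G$. Let $v^*$ be a vertex with $d(v^*) = \Delta$; after rotating $\alpha$ cyclically I may assume $S(v^*,\alpha) = \{1, \ldots, \Delta\}$. Define $H$ to be the subgraph of $G$ spanned by the edges whose $\alpha$-color lies in $\{\Delta+1, \ldots, t\}$. A standard verification (as in Petrosyan--Mkhitaryan) shows that the relabeling $c \mapsto c - \Delta$ turns $\alpha|_H$ into an interval $(t-\Delta)$-coloring of $H$, so $H \in \mathfrak{N}$, and $v^*$ is isolated in $H$. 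Let $I := \{v \in V(G) : S(v,\alpha) \subseteq \{1, \ldots, \Delta\}\}$ be the set of vertices isolated in $H$. If $|I| \geq 2$, then applying Theorem~\ref{th:AsratianKamalian} to $H$ viewed on the support $V(G) \setminus I$ yields $t - \Delta \leq |V(G)| - |I| - 1 \leq |V(G)| - 3$, contradicting the hypothesis. Hence the reduction is to show that $|I| \geq 2$.

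To find a second vertex of $I$, I would examine the union $M_1 \cup M_t$ of the color-$1$ and color-$t$ matchings, which decomposes into paths and even cycles. Since $t > \Delta$ in every nontrivial case, $v^*$ has a color-$1$ edge but no color-$t$ edge, so $v^*$ is an endpoint of a path $P$ in $M_1 \cup M_t$; each internal vertex of $P$ carries both colors $1$ and $t$ and hence has a wrapping color-set. Let $z$ denote the other endpoint of $P$. If $P$ has odd length, then the final edge of $P$ at $z$ has color $1$ and $z$ has no color-$t$ edge; hence $S(z,\alpha)$ cannot wrap (wrapping requires both colors $1$ and $t$) and must equal $\{1, \ldots, d(z)\} \subseteq \{1, \ldots, \Delta\}$, placing $z$ in $I \setminus \{v^*\}$, which closes the argument.

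The main obstacle will be the case of even-length $P$: here $z$ has color $t$ but no color $1$, $S(z,\alpha) = \{t-d(z)+1, \ldots, t\}$, and $z \notin I$. To resolve this I would exploit $\Delta \geq 3$ together with triangle-freeness. Triangle-freeness forces $N(v^*)$ to be an independent set of size $\Delta$, so $|V(G)| \geq \Delta+1$ and $t \geq 2\Delta-1$. A short computation then shows that the color-set of the color-$\Delta$ neighbor $u_\Delta$ of $v^*$ cannot wrap, since a wrap $S(u_\Delta,\alpha) = \{1,\ldots,a'\} \cup \{b',\ldots,t\}$ containing $\Delta$ forces $b' \leq \Delta$ while also $b' \geq t-\Delta+2$, whence $t \leq 2\Delta-2$. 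Therefore $S(u_\Delta,\alpha)$ is an ordinary interval containing $\Delta$ of length at most $\Delta$. If its maximum equals $\Delta$, then $u_\Delta \in I$ and the proof closes; otherwise one iterates the alternating-path argument starting from $u_\Delta$, or applies the symmetric analysis to the dual subgraph $H'$ of edges colored in $\{1,\ldots,t-\Delta\}$ (which is also interval-colorable because the constraint $b \geq t-\Delta+2$ for wrapping vertices forces the intersection of each $S(v,\alpha)$ with the bottom range to be a single interval). Since $G$ is finite, this iteration must eventually produce the required second isolated vertex of $I$ (or of the dual $I'$), yielding the contradiction.
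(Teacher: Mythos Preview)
Your approach diverges from the paper's and carries a genuine gap in the even--length case. After establishing $v^{*}\in I$ and following the $M_{1}\cup M_{t}$ alternating path to its other endpoint $z$, your argument bifurcates correctly: odd length gives $z\in I$, even length gives $z\in I':=\{v:S(v,\alpha)\subseteq\{t-\Delta+1,\ldots,t\}\}$. But in the even case you now possess exactly one vertex in $I$ and one in $I'$, whereas you need two in the \emph{same} set. Your proposed fixes do not close this. The vertex $u_{\Delta}$ has $\min S(u_{\Delta},\alpha)\geq 2$ once $\max S(u_{\Delta},\alpha)>\Delta$, so it carries neither color $1$ nor (generically) color $t$; it is therefore not an endpoint in $M_{1}\cup M_{t}$, and ``iterating the alternating-path argument from $u_{\Delta}$'' has no clear meaning. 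The symmetric analysis via $H'$ is equally stuck: the only element of $I'$ you have produced is $z$, whose degree need not be $\Delta$, so the mirror of the $u_{\Delta}$ step is unavailable. Observing that all path endpoints of $M_{1}\cup M_{t}$ lie in $I\cup I'$ helps only when there are at least two paths; the residual case of a single even path with all other vertices on even $(1,t)$-cycles or off $M_{1}\cup M_{t}$ entirely is precisely what your sketch fails to exclude.

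The paper sidesteps this difficulty with a different rotation. Rather than anchoring at a maximum-degree vertex, it uses $t>|V(G)|$ and pigeonhole on the ``first color'' of each $S(v,\varphi)$ to rotate so that no vertex has cyclic first color $t$. This forces every wrapping spectrum $\{c,\ldots,t\}\cup\{1,\ldots,d\}$ to satisfy $c\leq t-1$, hence $d\leq\Delta-2$. Consequently, deleting only the colors $1,\ldots,\Delta-2$ (rather than $1,\ldots,\Delta$) already produces a genuine interval coloring of the remaining graph, and Theorem~\ref{th:AsratianKamalian} applied to it yields $t-(\Delta-2)\leq |V(G)|-1$ directly, with no need to locate isolated vertices at all. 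The saving of two colors in the deletion window is exactly what your isolated-vertex hunt was trying to recover.
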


\begin{proof}
    Suppose, for a contradiction,
    that the theorem is false and let $G \in \mathfrak{N}$ be graph
    with $\Delta(G) \geq 3$ which does not
		satisfy the conclusion of the theorem. 
		Let $\varphi$ be a cyclic interval
    $t$-coloring of $G$ satisfying that $t > |V(G)| + \Delta(G) -3$.
    If $t$ and $1$ are considered to be consecutive colors,
    then the first color of a set of consecutive colors can
    be chosen from the set $\{1,\dots,t\}$ in precisely $t$ ways.
    Let $$C_{\varphi} = \{c : c
    \text{ is the first color of some set $S(v,\varphi)$
        for a vertex $v$ of $G$}\}.$$
    Since $t > |V(G)|$,
    there is some color $i$ satisfying that $i \notin C_{\varphi}$.
    Moreover, by rotating the colors in $\varphi$ modulo $t$, we can construct
    a cyclic interval coloring $\varphi'$ such that
    $t \notin C_{\varphi'}$. Now, by removing all edges from $G$ with colors
    $1, \dots, \Delta(G)-2$ under $\varphi'$, we obtain a graph $H$
    such that the restriction of $\varphi'$ to $H$ is an interval
    $(t-(\Delta(G)-2))$-coloring.
    Thus, by Theorem \ref{th:AsratianKamalian}, we have that
    $t-(\Delta(G)-2)\leq |V(G)| -1$, which is a contradiction and
    the result follows.
\end{proof}

Part (ii) of Theorem \ref{th:PetrosyanMkhit} implies
that Conjecture \ref{conj:upperbound} (ii) holds
for graphs with maximum degree $2$. Proceeding as in the proof of
Theorem \ref{th:degree3}, and using Theorem \ref{th:GiaroKubale}
instead of Theorem \ref{th:AsratianKamalian},
it is straightforward to prove the following improvement of that result.

\begin{theorem}
\label{th:degree3general}
        For any graph $G \in \mathfrak{N}_c$
        with $\Delta(G) \geq 3$,
        $W_c(G) \leq 2|V(G)| + \Delta(G) -6$.
\end{theorem}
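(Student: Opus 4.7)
The plan is to repeat the argument used in the proof of Theorem \ref{th:degree3} verbatim, substituting Theorem \ref{th:GiaroKubale} for Theorem \ref{th:AsratianKamalian} at the final step. Concretely, I would suppose for contradiction that there is a graph $G \in \mathfrak{N}_{c}$ with $\Delta(G) \geq 3$ and a cyclic interval $t$-coloring $\varphi$ satisfying $t > 2|V(G)| + \Delta(G) - 6$. Define
$$C_\varphi = \{c : c \text{ is the first color (in cyclic order) of some } S(v,\varphi)\}.$$
Since $\Delta(G) \geq 3$ forces $|V(G)| \geq 4$, the assumption on $t$ gives $t > 2|V(G)| - 3 > |V(G)|$, so some color $i$ lies outside $C_\varphi$. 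After cyclically rotating the colors by $t-i$, I obtain a cyclic interval $t$-coloring $\varphi'$ with $t \notin C_{\varphi'}$.

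Next, I would remove every edge colored $1,2,\ldots,\Delta(G)-2$ under $\varphi'$, yielding a subgraph $H$ of $G$. Because no cyclic interval $S(v,\varphi')$ begins at $t$, any wrap-around interval $\{i_v,\ldots,t\}\cup\{1,\ldots,j_v\}$ satisfies $i_v \leq t-1$ and $|S(v,\varphi')| \leq \Delta(G)$, so $j_v \leq \Delta(G)-2$; thus deleting the color classes $1,\ldots,\Delta(G)-2$ strips the wrapped portion entirely from every such vertex, and non-wrap-around intervals simply have a prefix removed. In either case each remaining color set is an ordinary interval contained in $\{\Delta(G)-1,\ldots,t\}$, so after relabeling we have an interval $(t-\Delta(G)+2)$-coloring of $H$. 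Since $|V(H)| = |V(G)| \geq 4$, Theorem \ref{th:GiaroKubale} applies and gives $t-\Delta(G)+2 \leq 2|V(G)|-4$, i.e.\ $t \leq 2|V(G)|+\Delta(G)-6$, contradicting the assumption on $t$.

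The only subtlety, and the main point to verify carefully, is that after deleting the color classes $1,\ldots,\Delta(G)-2$ the resulting edge coloring really is an interval (rather than cyclic interval) coloring and that $H$ admits enough colors to apply Theorem \ref{th:GiaroKubale}; the former is precisely what the condition $t \notin C_{\varphi'}$ is designed to deliver, while the latter follows since $t-\Delta(G)+2 > 2|V(G)|-4 \geq 4$ forces $H$ to have at least three vertices. This is exactly the corresponding step in the proof of Theorem \ref{th:degree3}, so no new ideas are needed.
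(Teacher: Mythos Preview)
Your proposal is correct and follows exactly the approach the paper indicates: it reproduces the proof of Theorem~\ref{th:degree3} verbatim and swaps in Theorem~\ref{th:GiaroKubale} at the final step, with the additional (and appropriate) check that $H$ has at least three vertices so that Theorem~\ref{th:GiaroKubale} applies.
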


    Theorems \ref{th:degree3} and \ref{th:degree3general} imply that
    Conjecture
    \ref{conj:upperbound}  holds
    for any graph with maximum degree at most
    $3$. Next, we prove that Conjecture \ref{conj:upperbound} (i)
    holds for any graph with maximum degree at most $4$.

\begin{theorem}
\label{thm:maxdegree4}
    For any triangle-free graph $G\in \mathfrak{N}_c$ with $\Delta(G) \leq 4$, $W_c(G) \leq |V(G)|$.
\end{theorem}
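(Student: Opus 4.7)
For $\Delta(G) \leq 2$ the claim follows from Theorem \ref{th:PetrosyanMkhit}(i), which yields $W_c(G) \leq |V(G)| + \Delta(G) - 2 \leq |V(G)|$, and for $\Delta(G) = 3$ from Theorem \ref{th:degree3}, which yields $W_c(G) \leq |V(G)| + 0 = |V(G)|$. The substantial case is $\Delta(G) = 4$, where Theorem \ref{th:degree3} already gives $W_c(G) \leq |V(G)| + 1$; the plan is to rule out equality. Assume for contradiction that $\varphi$ is a cyclic interval $t$-coloring with $t = |V(G)| + 1$.

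The first step is to extract rigidity from the equality case in the proof of Theorem \ref{th:degree3}. After rotating $\varphi$ to $\varphi'$ with $t \notin C_{\varphi'}$ and removing the edges of colors $1$ and $2$, the resulting subgraph $H$ carries an interval $(t-2)$-coloring; since $t - 2 = |V(G)| - 1$, Theorem \ref{th:AsratianKamalian} is tight on $H$, forcing $V(H) = V(G)$. Hence every vertex has an incident edge of color at least $3$ under $\varphi'$, and $|C_{\varphi'}| \leq |V(G)| = t - 1$ together with $t \notin C_{\varphi'}$ makes the start-color map a bijection: I label the vertices $v_1, \ldots, v_{t-1}$ so that $v_c$ is the unique vertex whose cyclic interval begins at color $c$. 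The no-isolation condition then forces $d(v_1) \geq 3$ and $d(v_2) \geq 2$.

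The crux of the plan is a parity argument on end-colors. Since the edges of each color $k$ form a matching $M_k$, the number $g(k)$ of vertices using color $k$ equals $2|M_k|$ and is therefore even. A standard telescoping gives $g(k) - g(k-1) = s(k) - \tilde{e}(k-1)$ (indices taken cyclically), where $s(k)$ counts vertices with start $k$ and $\tilde{e}(j)$ counts vertices with end $j$. Since $s(k) = 1$ for $k \in \{1, \ldots, t-1\}$ and $s(t) = 0$, parity forces $\tilde{e}(j)$ to be odd for every $j \in \{1, \ldots, t\} \setminus \{t-1\}$ and $\tilde{e}(t-1)$ to be even. Combined with $\sum_j \tilde{e}(j) = |V(G)| = t - 1$, this pins down $\tilde{e}(j) = 1$ for $j \neq t - 1$ and $\tilde{e}(t-1) = 0$, so each $j \neq t - 1$ has a unique vertex $w_j$ ending at color $j$.

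The endgame is a short enumeration. Solving $c + d(v_c) \equiv 2, 3 \pmod{t}$ over admissible pairs $(c, d) \in \{1, \ldots, t-1\} \times \{1, 2, 3, 4\}$ and discarding those violating $d(v_1) \geq 3$ or $d(v_2) \geq 2$, I would conclude $w_1 = v_{t-2}$ with $d(v_{t-2}) = 4$ and $w_2 = v_{t-1}$ with $d(v_{t-1}) = 4$. A similar enumeration shows that the set of wrapping vertices is exactly $\{v_{t-1}, v_{t-2}\}$ (at $\Delta(G) = 4$, wrapping requires $c + d(v_c) \geq t + 2$, which leaves no other candidate). Since every wrapping vertex uses color $1$ and so does $v_1$, the matching $M_1$ saturates exactly three distinct vertices, contradicting the parity $|V(M_1)| = 2|M_1|$. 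The main obstacle I anticipate is setting up the parity identity with the correct cyclic indexing and checking the short but fiddly enumerations for $w_1$, $w_2$, and the wrapping set; once these are in hand the final parity contradiction at $M_1$ closes the argument in one line.
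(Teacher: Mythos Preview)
Your approach is genuinely different from the paper's. The paper takes a vertex-minimal counterexample, observes that at most two vertices are ``cyclic'' (incident with edges of both colors $1$ and $t$), and then \emph{splits} each cyclic vertex into a low-color copy and a high-color copy; the bulk of the work is a case analysis (one versus two cyclic vertices; adjacent versus non-adjacent; shared versus disjoint neighbourhoods) in which each case is pushed to a contradiction via Theorem~\ref{th:AsratianKamalian} applied to the split graph. No global parity or matching count appears. Your route---reduce to $\Delta=4$, $t=|V(G)|+1$ using Theorems~\ref{th:PetrosyanMkhit} and~\ref{th:degree3}, then force a parity contradiction at the colour class $M_1$---is considerably more structural and would be shorter if it goes through.

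Most of the plan is sound: the reduction to $t=|V(G)|+1$ is correct; the tightness argument showing $V(H)=V(G)$ (hence $d(v)\ge 3$ for start $1$ and $d(v)\ge 2$ for start $2$) is correct; the telescoping identity $g(k)-g(k-1)=s(k)-\tilde e(k-1)$ is correct; and, \emph{granting the bijection}, your enumeration does pin down $w_2=v_{t-1}$ with degree $4$, then $w_1=v_{t-2}$ with degree $4$, and $g(1)=3$ gives the contradiction.

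The gap is the sentence ``$|C_{\varphi'}|\le |V(G)|=t-1$ together with $t\notin C_{\varphi'}$ makes the start-color map a bijection.'' This is a non sequitur: you have $t-1$ vertices with starts in $\{1,\dots,t-1\}$, but nothing yet rules out two vertices sharing a start while some other colour in $\{1,\dots,t-1\}$ goes unused. Your entire downstream computation (the values $s(k)=1$ for $k\neq t$, the determination of all $\tilde e(j)$, the identification of $w_1,w_2$ and of the wrapping set) rests on this injectivity. One partial repair: if two \emph{cyclically consecutive} colours are non-starts, rotate them to $t-1,t$; then the only possible wrapping vertices have start $t-2$ and degree $4$, so deleting just colour $1$ already yields an interval $(t-1)$-colouring of a triangle-free graph on $t-1$ vertices, contradicting Theorem~\ref{th:AsratianKamalian}. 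But the case of two (or more) pairwise non-consecutive non-start colours still needs an argument, and it is not covered by the ``short but fiddly enumerations'' you anticipate. You should either supply that missing case or rerun the parity bookkeeping without assuming $s(k)\equiv 1$.
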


\begin{proof}
    Suppose that the theorem is false and let $G$ be a
    vertex-minimal counterexample with maximum degree $4$.
    Let $\varphi$ be a cyclic interval $t$-coloring of $G$,
    where $t > |V(G)|$.

    Since $t > |V(G)|$, $\varphi$ is not an interval coloring.
    Moreover, by a counting argument similar to the one in the proof
    of the preceding theorems we may assume that at most
    two vertices in $G$ are {\em cyclic} vertices under $\varphi$,
    that is, they are incident to one edge colored $1$
    and one edge colored $t$.

    \bigskip

    Suppose first that only one vertex $v$
    in $G$ is cyclic under $\varphi$. From $G$ we define
    a new graph $H$ by splitting $v$ into two new vertices
    $v'$ and $v''$
    where
    \begin{itemize}

    \item $v'$ is adjacent to all vertices $x$ such that
    $xv \in E(G)$ and $\varphi(xv) \in \{t-2,t-1,t\}$;

    \item $v''$ is adjacent to all vertices $x$ such that
    $xv \in E(G)$ and $\varphi(xv) \in \{1,2,3\}$.

    \end{itemize}
		The edge coloring $\varphi$ induces an edge coloring
    $\varphi_H$ of $H$, and since $v$ is the only cyclic vertex
    of $G$, $\varphi_H$ is an interval $t$-coloring. Hence,
		by Theorem \ref{th:AsratianKamalian},
    $t \leq |V(H)| -1 = |V(G)|$, a contradiction.

\bigskip

    Suppose now instead that two vertices $u, v \in V(G)$ are cyclic.
    We first consider the case when $u$ and $v$ are adjacent.
    We shall assume that $\varphi(uv) \in \{1,2,3\}$
    (the case when $\varphi(uv) \in \{t-2, t-1, t\}$ can be done
    analogously).

    From $G$ we define a new graph $H$ by splitting the vertex
    $v$ into two new vertices $v'$ and $v''$, and the vertex
    $u$ into two new vertices $u'$ and $u''$
    where
    \begin{itemize}

    \item $v'$ is adjacent to all vertices $x\neq u$ such that
    $xv \in E(G)$ and $\varphi(xv) \in \{t-2,t-1,t\}$;

    \item $v''$ is adjacent to all vertices $x \neq u$ such that
    $xv \in E(G)$ and $\varphi(xv) \in \{1,2,3\}$;

    \item $u'$ is adjacent to all vertices $x \neq v$ such that
    $xu \in E(G)$ and $\varphi(xu) \in \{t-2,t-1,t\}$;

    \item $u''$ is adjacent to all vertices $x \neq v$ such that
    $xu \in E(G)$ and $\varphi(xu) \in \{1,2,3\}$.

    \end{itemize}
    Let $\varphi_H$ be the edge coloring of $H$ induced by
    $\varphi$. Let $J = H + \{u'v', u''v''\}$ and
    color the edge $u'v'$ with color $t+1$ and $u''v''$
    with color $\varphi(uv)$. Denote the obtained coloring
    of $J$ by $\varphi_J$.
    Since $u$ and $v$ are the only cyclic vertices of $G$,
    $\varphi_J$ is an interval $(t+1)$-coloring. Consequently,
    $t +1 \leq |V(J)| -1 = |V(G)| +1$, which contradicts our assumption,
    and the result follows.

\bigskip

    Let us now consider the case when $u$ and $v$ are not adjacent.
    We first prove that we may assume that
    $S_G(v,\varphi) = S_G(u,\varphi)   = \{t-1,t,1,2\}$.
    Suppose, for example, that $t-1$ does not appear at $u$.
    Let $e$ be the edge incident with $u$ colored $t$ and set $G' = G-e$.
    Then the restriction of $\varphi$ to $G'$ is a cyclic interval coloring
    with $t$ colors and $|V(G')| = |V(G)|$. Moreover, in the coloring of
    $G'$ there is a only one cyclic vertex, and we may thus proceed as
    above. A similar argument applies if color $2$
    does not appear at one of the vertices
    $u,v$. We thus conclude that
    we may assume that
    $S_G(v,\varphi) = S_G(u,\varphi) =  \{t-1,t,1,2\}$.

    As above, we form a new graph $H$ from $G$ by splitting the vertex
    $u$ into two new vertices $u'$ and $u''$,
    and $v$ into two vertices $v'$ and $v''$,
    where
    \begin{itemize}

    \item $u'$ ($v'$) is adjacent to every neighbor of $u$ ($v$) in
    $G$ that is
    joined to $u$ ($v$) by an edge with color in $\{t-2,t-1,t\}$;

    \item $u''$ ($v''$) is adjacent to every neighbor
    of $u$ ($v$) in $G$ that is
    joined to $u$ ($v$) by an edge with color in $\{1,2,3\}$.

    \end{itemize}

    Let $\varphi_H$ be the interval $t$-coloring of $H$ induced by $\varphi$.
    Moreover, let $x'$ and $y'$ be the neighbors of $u'$ and $v'$,
    respectively, that are colored $t$ under $\varphi_H$, and
    $x''$ and $y''$  be the neighbors of $u''$ and $v''$, respectively,
    that are colored $1$ under $\varphi_H$. Note that all these vertices
    are distinct, since $u$ and $v$ are the only cyclic vertices
    in $G$.

    \bigskip

    We first consider the case when $x'y' \in E(H)$ or $x''y'' \in E(H)$.
    Suppose e.g. that $x'y' \in E(H)$. Then, since $H$ is triangle-free and
    only colors $t-1$ and $t$ appear at $u'$ and $v'$,
    $u'$ and $v'$ has no common neighbor in $H$. Hence, the graph $H + u'v'$
    is triangle-free and by coloring $u'v'$ with color $t+1$, we obtain
    an interval $(t+1)$-coloring of $H + u'v'$.
    Thus $t+1 \leq |V(H)|-1 = |V(G)|+1$,
    a contradiction. The case when $x''y'' \in E(H)$ is analogous.

    \bigskip

    Suppose now that $x'y', x''y'' \notin E(H)$. If $u'$ and $v'$
    have no common neighbor in $H$, then we proceed as in the
    preceding paragraph, so assume that $u'$ and $v'$ have a common neighbor.
    This means that either $x' \in N_H(u') \cap N_H(v')$ or
    $y' \in N_H(u') \cap N_H(v')$. Suppose, for instance, that the former holds.
		We first consider the case
    when $u'y' \notin E(H)$. 
		
		If $N_H(x') \cap N_H(y') = \{v'\}$, then neither $x' v'$
		or $v' y'$
		is contained in a cycle of length at most $4$, 
		because $d_H(v')=2$.
		Thus the graph $J$ obtained from $H- v'y'$ by identifying
		$v'$ and $y'$ is triangle-free. Moreover, the coloring $\varphi_H$
		induces an interval $t$-coloring of $J$, and, consequently,
		$t \leq |V(J)|-1 = |V(G)|$, which is a contradiction.
		
		If $| N_H(x') \cap N_H(y')|  = 2$, 
		then we proceed as follows. If there is an edge colored $t$
		under $\varphi_H$
		that is not incident with $u'$ or $v'$, then we form 
		a graph $J$ by removing the edge $u'x'$ from $H-v'$.
		Note that the restrction of $\varphi_H$  to $J$ is an
		interval $t$-coloring of $J$. Hence $t \leq |V(J)|-1 = |V(G)|$,
		a contradiction, as above. If all edges colored
		$t$ under $\varphi_H$ are incident with $u'$ or $v'$, 
		then the restriction of $\varphi_H$ to the graph $H-\{u',v'\}$
		is an interval $(t-1)$-coloring, unless $d_H(y')=1$. 
		If $d_H(y')> 1$, then it follows that $t-1 \leq |V(H)|-3$,
		again a contradiction to our above assumption.
		If $d_H(y') =1$, then a similar contradiction can be derived
		by considering the graph $H-\{u',v', y'\}$ and the restriction
		of $\varphi_H$ to this graph.
	
    Now we consider the case when $u'y' \in E(H)$.
    Suppose first that there is some vertex
    $w \in V(H) \setminus \{u',v', x', y'\}$ where color $t-1$ appears
    under $\varphi_H$. Then the restriction of $\varphi_H$ to the graph
    $H-\{u',v'\}$ is an interval $t'$-coloring, for some $t' \geq t-1$,
    and, consequently,
    $t -1 \leq |V(H)|- 2 -1 = |V(G)|-1$, a contradiction.

    Now assume that there is no vertex
    $w \in V(H) \setminus \{u',v', x', y'\}$ where color $t-1$ appears
    under $\varphi_H$. Then $u$ and $v$ are the only vertices in $G$
    where colors $t-1$ and $t$ appear under $\varphi$.
    Denote by $x$ and $y$ respectively the vertices in $G$ such that
    $\varphi(ux) =t$ and $\varphi(vy) =t$.
    Consider the graph $J$ obtained from $G -\{ux, uy, vx, vy\}$
    by identifying $u$ and $x$,
    and $v$ and $y$, respectively.
    $J$ is triangle-free and the coloring $\varphi$
    induces an cyclic interval $(t-2)$-coloring of $J$.
    Hence, $J$ is a counterexample with a smaller number of vertices
    than $G$, a contradiction to the choice of $G$.
\end{proof}

    In the following we shall derive some upper bounds on $W_c(G)$ 
		related
    to Conjecture \ref{conj:upperbound}. To this end, we 
		first introduce some new
    notation. 
		For brevity, a graph $G$ is called an 
		\emph{$(n,m)$-graph} if it contains $n$
		vertices and $m$ edges.
		Let $G$ be an $(n,m)$-graph
    and $G\in \mathfrak{N}_{c}$. Also, let $\alpha$ be a cyclic interval
    $W_{c}(G)$-coloring of $G$ and $U$ be the set of all vertices
    $v\in V(G)$ such that
    $S(v,\alpha)$ is not an interval of integers. If $U=\emptyset$, then
    $G\in \mathfrak{N}$ and 
		{\black $W_{c}(G)\leq W(G)\leq 2|V(G)|-3$},
    by Theorem \ref{th:Kamalian}. Assume that $U\neq \emptyset$.
    Clearly, for each
    $u\in U$, there exists a color $c_{u}$ ($1<c_{u}<W_{c}(G)$) such
    that $c_{u}\notin S(u,\alpha)$. For each $u\in U$, we split the
    neighbors of $u$ into two disjoint sets as follows:
    $N_{G}(u)=N_{\alpha}^{<}(u)\cup N_{\alpha}^{>}(u)$, where
    $N_{\alpha}^{<}(u)=\{v\in N_{G}(u)\colon\,1\leq \alpha(uv)<c_{u}\}$
    and $N_{\alpha}^{>}(u)=\{v\in N_{G}(u)\colon\,c_{u}< \alpha(uv)\leq
    W_{c}(G)\}$. (Note that these sets are uniquely determined
		from the cyclic interval coloring $\alpha$.)
		We construct a new graph $S_{\alpha,U}(G)$ by splitting
    all the vertices of $U$ as follows: for each $u\in U$, we delete $u$
    from $G$ and add two new vertices $u^{<}$ and $u^{>}$; then we join
    the vertex $u^{<}$ with the vertices $N_{\alpha}^{<}(u)$ and color
    the edge $u^{<}v$ by the color $\alpha(uv)$ for every $v\in
    N_{\alpha}^{<}(u)$; next we join the vertex $u^{>}$ with the
    vertices $N_{\alpha}^{>}(u)$ and color the edge $u^{>}v$ by the
    color $\alpha(uv)$ for every $v\in N_{\alpha}^{>}(u)$; finally we
    color all the remaining edges using the same colors as in $G$.
    Clearly, $S_{\alpha,U}(G)$ is an $(n+\vert U\vert,m)$-graph and
    $S_{\alpha,U}(G)\in \mathfrak{N}$.

\begin{lemma}
\label{ourlemma}(Translation Lemma) 
Let $\mathfrak{C}\subseteq
\mathfrak{N}_{c}$, $\mathfrak{C^{\prime}}\subseteq \mathfrak{N}$ and
assume that $W(G^{\prime})\leq f(\vert V(G^{\prime})\vert)$ holds
for any graph $G' \in \mathfrak{C^{\prime}}$, where $f$ is a
monotonically non-decreasing function. If for every $G\in
\mathfrak{C}$, there is a cyclic interval $W_{c}(G)$-coloring
$\alpha$ of $G$ and a subset $U\subseteq V(G)$, such that the graph
$S_{\alpha,U}(G)$ belongs to $\mathfrak{C^{\prime}}$, then for any
$G\in \mathfrak{C}$, we have
\begin{center}
$W_{c}(G)\leq f\left(\vert V(G)\vert+\left\lfloor\frac{2\vert
E(G)\vert-\vert V(G)\vert}{W_{c}(G)}\right\rfloor\right)$.
\end{center}
\end{lemma}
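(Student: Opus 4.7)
The plan is to exploit the rotational symmetry inherent in cyclic interval colorings in order to produce a coloring whose splitting set $U$ is small, then feed the resulting split graph into the hypothesis.

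Fix $G\in\mathfrak{C}$ and let $t=W_{c}(G)$. Choose any cyclic interval $t$-coloring $\alpha$ of $G$. For each $r\in\{0,1,\dots,t-1\}$, the rotated coloring
\[
\alpha_{r}(e)=\bigl((\alpha(e)-1+r)\bmod t\bigr)+1
\]
is again a cyclic interval $t$-coloring of $G$; denote by $U_{r}\subseteq V(G)$ the set of vertices whose spectrum under $\alpha_{r}$ fails to be a linear interval. The hypothesis of the lemma will be applied to the pair $(\alpha_{r^{\ast}},U_{r^{\ast}})$ produced below, yielding $S_{\alpha_{r^{\ast}},U_{r^{\ast}}}(G)\in\mathfrak{C}'$; note that $\alpha_{r^{\ast}}$ then induces a genuine interval $t$-coloring on this split graph, since all its non-interval vertices have been split.

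The computational core is a double-counting identity. Picture the color set $\{1,\dots,t\}$ as a cyclic wheel. For each vertex $v$, the spectrum $S(v,\alpha)$ is an arc of length $d_{G}(v)$ on this wheel, and $v\in U_{r}$ precisely when the cut of $\alpha_{r}$ (the break between colors $t$ and $1$) falls strictly between two consecutive points of this arc. A short case analysis shows that this happens for exactly $d_{G}(v)-1$ values of $r$ when $d_{G}(v)<t$, and for no $r$ at all when $d_{G}(v)=t$. Summing over vertices gives
\[
\sum_{r=0}^{t-1}|U_{r}|\;\leq\;\sum_{v\in V(G)}(d_{G}(v)-1)\;=\;2|E(G)|-|V(G)|,
\]
so pigeonhole furnishes an index $r^{\ast}$ with $|U_{r^{\ast}}|\leq\lfloor(2|E(G)|-|V(G)|)/t\rfloor$.

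To conclude, set $H:=S_{\alpha_{r^{\ast}},U_{r^{\ast}}}(G)$. Then $H\in\mathfrak{C}'$, $|V(H)|=|V(G)|+|U_{r^{\ast}}|$, and $H$ inherits an interval $t$-coloring from $\alpha_{r^{\ast}}$, so
\[
W_{c}(G)=t\leq W(H)\leq f(|V(G)|+|U_{r^{\ast}}|)\leq f\!\left(|V(G)|+\left\lfloor\frac{2|E(G)|-|V(G)|}{W_{c}(G)}\right\rfloor\right)
\]
by the monotonicity of $f$. The main obstacle I expect is the counting claim that a vertex of degree $d<t$ contributes to exactly $d-1$ of the rotated sets $U_{r}$ (with proper care for the edge case $d_{G}(v)=t$); once that identity is in place, the remainder of the proof is an averaging step together with a direct invocation of the hypothesis.
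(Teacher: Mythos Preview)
Your argument is correct and is essentially identical to the paper's: the paper phrases the same double count by saying that a color $i$ \emph{splits} a vertex $v$ when $\{i-1,i\}\subseteq S(v,\alpha)$ (mod $t$), observes that each $v$ is split by $d_G(v)-1$ colors, and then rotates so that color~$1$ splits the fewest vertices---exactly your averaging over rotations followed by a pigeonhole choice of $r^\ast$. Your treatment of the boundary case $d_G(v)=t$ is in fact slightly more careful than the paper's, but otherwise the two proofs coincide.
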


\begin{proof}
Let $G$ be an $(n,m)$-graph and $G\in \mathfrak{C}$. Also, let
$t=W_{c}(G)$. Consider a cyclic interval $t$-coloring $\alpha$ of
$G$. We say the color $i$ ($1\leq i\leq t$) \textit{splits} 
the vertex $v\in
V(G)$ if $\{i-1,i\}\subseteq S(v,\alpha)$ (modulo $t$). For each
$v\in V(G)$, let $S^{\prime}(v,\alpha)$ denote the set of all colors
$i$ that splits the vertex $v$. Clearly, $\vert
S^{\prime}(v,\alpha)\vert=d_{G}(v)-1$ for every $v\in V(G)$. By the
pigeonhole principle, there exists a color that splits no more than
$\left\lfloor \frac{{\sum_{v\in
V(G)}\left(d_{G}(v)-1\right)}}{t}\right\rfloor=\left\lfloor
\frac{2m-n}{t}\right\rfloor$ vertices. Without loss of generality we
may assume that this color is $1$ (by rotating the colors in
$\alpha$ modulo $t$, we can construct such a coloring). 
Let $U$ be the
set of vertices of $G$ that are splitted by color $1$. Then
$\vert U\vert \leq \left\lfloor \frac{2m-n}{t}\right\rfloor$. 
Let us now consider the graph $S_{\alpha,U}(G)$ defined above. 
If $S_{\alpha,U}(G)\in
\mathfrak{C^{\prime}}$, then
\begin{center}
$W_{c}(G)=t\leq W\left(S_{\alpha,U}(G)\right)\leq f(\left\vert
V\left(S_{\alpha,U}(G)\right)\right\vert)\leq f\left(n+\left\lfloor
\frac{2m-n}{W_{c}(G)}\right\rfloor\right)$.
\end{center} 
\end{proof}

The following two theorems are improvements of the bounds
on $W_c(G)$ in Theorem \ref{th:PetrosyanMkhit} for graphs
with large maximum degree.

\begin{theorem}
\label{mytheorem1} If $G$ is a triangle-free graph and $G\in
\mathfrak{N}_{c}$, then
\begin{center}
$W_{c}(G)\leq \frac{\sqrt{3}+1}{2}(\vert V(G)\vert-1)$.
\end{center}
\end{theorem}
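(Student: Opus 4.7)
The plan is to apply the Translation Lemma (Lemma \ref{ourlemma}) with $\mathfrak{C}$ being the class of triangle-free graphs in $\mathfrak{N}_c$, $\mathfrak{C}'$ the class of triangle-free graphs in $\mathfrak{N}$, and $f(n) = n-1$ as permitted by Theorem \ref{th:AsratianKamalian}. The key preliminary observation is that the splitting construction $S_{\alpha,U}(G)$ cannot create any triangle that was not already present in $G$: splitting a vertex $u$ into $u^<$ and $u^>$ only redistributes the edges incident with $u$, so any triangle in $S_{\alpha,U}(G)$ would pull back to a closed walk of length three in $G$, hence to a triangle in $G$. Thus $S_{\alpha,U}(G)$ is triangle-free whenever $G$ is, so the hypothesis of Lemma \ref{ourlemma} is satisfied for any $G \in \mathfrak{C}$.

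Setting $n = |V(G)|$, $m = |E(G)|$, and $t = W_c(G)$, the Translation Lemma yields
\begin{equation*}
t \;\leq\; n + \left\lfloor \frac{2m - n}{t} \right\rfloor - 1 \;\leq\; n - 1 + \frac{2m - n}{t}.
\end{equation*}
Multiplying through by $t$ and rearranging gives the quadratic inequality
\begin{equation*}
t^2 - (n-1)\,t - (2m - n) \;\leq\; 0,
\end{equation*}
so that
\begin{equation*}
t \;\leq\; \frac{(n-1) + \sqrt{(n-1)^2 + 4(2m - n)}}{2}.
\end{equation*}

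To finish, I would invoke Mantel's theorem, which states that any triangle-free graph on $n$ vertices satisfies $m \leq n^2/4$. Substituting this bound gives
\begin{equation*}
(n-1)^2 + 4(2m - n) \;\leq\; (n-1)^2 + 2n^2 - 4n \;=\; 3n^2 - 6n + 1 \;=\; 3(n-1)^2 - 2 \;\leq\; 3(n-1)^2,
\end{equation*}
and therefore
\begin{equation*}
W_c(G) \;=\; t \;\leq\; \frac{(n-1) + \sqrt{3}\,(n-1)}{2} \;=\; \frac{\sqrt{3}+1}{2}(|V(G)| - 1),
\end{equation*}
as required.

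The main obstacle is really the verification that the splitting operation preserves triangle-freeness, since without this the Translation Lemma cannot be invoked with the triangle-free bound from Theorem \ref{th:AsratianKamalian}; after that step the proof is a short computation combining the Translation Lemma with Mantel's bound. One minor care-point is the floor in the Translation Lemma, but since we upper-bound the floor by its argument the inequality is preserved.
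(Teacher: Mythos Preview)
Your proof is correct and follows essentially the same route as the paper: apply the Translation Lemma with $\mathfrak{C}'$ the triangle-free graphs in $\mathfrak{N}$ and $f(n)=n-1$ from Theorem~\ref{th:AsratianKamalian}, then combine the resulting inequality with Mantel's theorem to obtain and solve the quadratic $t^{2}-(n-1)t-(2m-n)\le 0$. The only difference is cosmetic: you explicitly justify why $S_{\alpha,U}(G)$ remains triangle-free (which the paper asserts without comment), and you organize the final algebra slightly differently, but the argument is the same.
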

\begin{proof}
Let $G$ be a triangle-free $(n,m)$-graph and $\mathfrak{C}$
($\mathfrak{C^{\prime}}$) be the set of all cyclically interval
colorable (interval colorable) triangle-free graphs. 
By Theorem \ref{th:AsratianKamalian}, we have
$W(G^{\prime})\leq \vert V(G^{\prime})\vert-1$ for every
$G^{\prime}\in \mathfrak{C^{\prime}}$.

Let
$t=W_{c}(G)$ and consider a cyclic interval $t$-coloring $\alpha$ of
$G$.  As in the proof of Lemma
\ref{ourlemma}, we construct a graph $S_{\alpha,U}(G)$ from
the coloring $\alpha$, $G$ and $U$, where 
$\vert U\vert \leq \left\lfloor\frac{2m-n}{t}\right\rfloor$.
Since $S_{\alpha,U}(G)\in
\mathfrak{C^{\prime}}$, by Lemma \ref{ourlemma}, we obtain

\begin{center}
$t\leq W\left(S_{\alpha,U}(G)\right)\leq \left\vert
V\left(S_{\alpha,U}(G)\right)\right\vert -1 \leq n+\left\lfloor
\frac{2m-n}{t}\right\rfloor-1$.
\end{center}

On the other hand, since $G$ is triangle-free, by Mantel's theorem,
we have $2m\leq \frac{n^{2}}{2}$. Taking this into account, we obtain

\begin{center}
$t\leq n+\left\lfloor \frac{2m-n}{t}\right\rfloor-1\leq
n+\frac{\frac{n^{2}}{2}-n}{t}-1$.
\end{center}

This yields a quadratic inequality
$t^{2}-(n-1)t-n(\frac{n}{2}-1)\leq 0$. Thus,

\begin{center}
$W_{c}(G)=t\leq \frac{1}{2}\left(n-1+\sqrt{3n^{2}-6n+1}\right)\leq
\frac{\sqrt{3}+1}{2}(\vert V(G)\vert-1)$.
\end{center}
\end{proof}

\begin{theorem}
\label{mytheorem2} If $G$ is a graph with at least two
vertices and $G\in \mathfrak{N}_{c}$, then
\begin{center}
$W_{c}(G)\leq (\sqrt{3}+1)\vert V(G)\vert-3$.
\end{center}
\end{theorem}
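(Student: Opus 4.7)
The plan is to mimic the proof of Theorem \ref{mytheorem1}, substituting Theorem \ref{th:Kamalian} for Theorem \ref{th:AsratianKamalian} as the interval-colorable ingredient, and using the trivial bound $2|E(G)|\le |V(G)|(|V(G)|-1)$ in place of Mantel's theorem.

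Write $n=|V(G)|$, $m=|E(G)|$, and $t=W_c(G)$, and take $\mathfrak{C}\subseteq \mathfrak{N}_c$ to consist of all graphs on at least two vertices, with $\mathfrak{C}'=\mathfrak{N}$. As in the proof of Theorem \ref{mytheorem1}, a suitable rotation of a cyclic interval $t$-coloring $\alpha$ of $G$ produces a set $U$ with $|U|\le \lfloor (2m-n)/t\rfloor$ such that the split graph $S_{\alpha,U}(G)$ lies in $\mathfrak{N}$ and has at least $n\ge 2$ vertices. Applying Lemma \ref{ourlemma} with $f(x)=2x-3$ (which is monotonically non-decreasing, and applicable via Theorem \ref{th:Kamalian}) gives
$$
t \;\le\; 2\!\left(n + \left\lfloor\tfrac{2m-n}{t}\right\rfloor\right) - 3 \;\le\; 2n - 3 + \frac{2(2m-n)}{t}.
$$
(If $U=\emptyset$, the conclusion is immediate, since then $W_c(G)=W(G)\le 2n-3 < (\sqrt{3}+1)n-3$.)

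Next I would multiply through by $t$ and use $2m\le n(n-1)$, producing the quadratic inequality
$$
t^2 - (2n-3)t - 2n(n-2) \;\le\; 0,
$$
and hence $t \le \tfrac{1}{2}\bigl(2n-3+\sqrt{12n^2-28n+9}\bigr)$. The remaining task is an elementary verification that this upper bound is at most $(\sqrt{3}+1)n-3$ for every $n\ge 2$. Isolating the square root reduces the claim to $\sqrt{12n^2-28n+9}\le 2\sqrt{3}\,n-3$; since the right-hand side is positive for $n\ge 2$, we may square, and the inequality collapses to $12\sqrt{3}\,n\le 28n$, i.e.\ $12\sqrt{3}\le 28$, which plainly holds as $12\sqrt{3}\approx 20.78$.

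I do not foresee any serious obstacle; the argument is a direct analogue of the proof of Theorem \ref{mytheorem1}. The only mild subtlety is ensuring the right-hand side of the squared inequality is nonnegative, which is precisely why the hypothesis $n\ge 2$ suffices. A slightly sharper constant could be obtained by invoking Theorem \ref{th:GiaroKubale} when $|V(S_{\alpha,U}(G))|\ge 3$, but this refinement is not required for the stated bound.
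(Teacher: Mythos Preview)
Your proposal is correct and follows essentially the same approach as the paper: apply the Translation Lemma with $f(x)=2x-3$ via Theorem~\ref{th:Kamalian}, use the trivial bound $2m\le n(n-1)$, solve the resulting quadratic $t^{2}-(2n-3)t-2n(n-2)\le 0$, and bound the larger root by $(\sqrt{3}+1)n-3$. The paper does not spell out the final square-and-compare step, but your argument and the paper's are otherwise identical.
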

\begin{proof}
Let $G$ be a $(n,m)$-graph ($n\geq 2$) and $\mathfrak{C}$
($\mathfrak{C^{\prime}}$) be the set of all cyclically interval
colorable (interval colorable) graphs with at least two
vertices. By Theorem \ref{th:Kamalian}, we have 
$W(G^{\prime})\leq 2\vert V(G^{\prime})\vert
-3$ for every $G^{\prime}\in \mathfrak{C^{\prime}}$.

Set $t=W_{c}(G)$ and consider a cyclic interval
$t$-coloring $\alpha$ of $G$. As in the proof
of Lemma \ref{ourlemma},
we construct a graph $S_{\alpha,U}(G)$ from
the coloring $\alpha$, $G$ and $U$, where 
$\vert U\vert \leq \left\lfloor\frac{2m-n}{t}\right\rfloor$.
Since $S_{\alpha,U}(G)$ belongs to
$\mathfrak{C^{\prime}}$, we deduce from Lemma \ref{ourlemma} that
\begin{center}
$t\leq W\left(S_{\alpha,U}(G)\right)\leq 2\left\vert
V\left(S_{\alpha,U}(G)\right)\right\vert -3
\leq 2\left(n+\left\lfloor \frac{2m-n}{t}\right\rfloor\right)-3$.
\end{center}

On the other hand, since $2m\leq n(n-1)$, we obtain

\begin{center}
$t\leq 2\left(n+\left\lfloor
\frac{2m-n}{t}\right\rfloor\right)-3\leq
2\left(n+\frac{n(n-1)-n}{t}\right)-3$.
\end{center}

This yields a quadratic inequality $t^{2}-(2n-3)t-2n(n-2)\leq 0$.
Thus,

\begin{center}
$W_{c}(G)=t\leq
\frac{1}{2}\left(2n-3+\sqrt{12n^{2}-28n+9}\right)\leq
(\sqrt{3}+1)\vert V(G)\vert-3$.
\end{center} 
\end{proof}

Our next result is an improvement of part (ii) of Theorem
\ref{th:PetrosyanMkhit} for graphs $G$ with average degree $d(G) <\Delta(G)$.

\begin{theorem}
\label{translation-average-degree} If $G$ is a graph with
at least three vertices and $G\in \mathfrak{N}_{c}$, then
\begin{center}
$W_{c}(G)\leq 2\vert V(G)\vert + d(G) - 3.5$.
\end{center}
\end{theorem}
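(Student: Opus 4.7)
The plan is to proceed in direct parallel with the proofs of Theorems \ref{mytheorem1} and \ref{mytheorem2}, invoking the Translation Lemma (Lemma \ref{ourlemma}) but now feeding it the sharper bound available for interval colorable graphs on at least three vertices. Take $\mathfrak{C}$ to be the class of cyclically interval colorable graphs with at least three vertices and $\mathfrak{C^{\prime}}$ to be the corresponding class of interval colorable graphs; since $\vert V(S_{\alpha,U}(G))\vert \geq \vert V(G)\vert \geq 3$, the graph $S_{\alpha,U}(G)$ produced in Lemma \ref{ourlemma} lies in $\mathfrak{C^{\prime}}$. Theorem \ref{th:GiaroKubale} then gives $W(G^{\prime}) \leq 2\vert V(G^{\prime})\vert - 4$ for every $G^{\prime} \in \mathfrak{C^{\prime}}$, so Lemma \ref{ourlemma} applies with the monotone function $f(x) = 2x - 4$.

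Writing $n = \vert V(G)\vert$, $m = \vert E(G)\vert$, and $t = W_c(G)$, the lemma yields
$$t \;\leq\; 2\left(n + \left\lfloor \frac{2m - n}{t}\right\rfloor\right) - 4 \;\leq\; 2n - 4 + \frac{4m - 2n}{t},$$
which is the quadratic inequality $t^2 - (2n-4)t - (4m - 2n) \leq 0$. Solving for $t$ gives
$$t \;\leq\; (n-2) + \sqrt{(n-2)^2 + 4m - 2n}.$$

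What remains, and is the only mildly delicate step, is the purely algebraic verification that this upper bound is at most $2n + d(G) - 7/2$, where $d(G) = 2m/n$. Isolating the square root and squaring (both sides are nonnegative for $n \geq 2$), and then cancelling the $n^2$ and $4m$ terms on the two sides (using $2n \cdot d(G) = 4m$), the desired inequality collapses to
$$d^2 - 3d + 3n - \frac{7}{4} \;\geq\; 0$$
with $d = d(G)$. Viewed as a quadratic in $d$, its discriminant $9 - 4(3n - \tfrac{7}{4}) = 16 - 12n$ is strictly negative for $n \geq 2$, so the quadratic is positive for every real $d$ and the conclusion follows. I expect no genuine obstacle beyond bookkeeping the constants carefully so that the coefficient of $d(G)$ lands at exactly $1$ and the additive constant at exactly $-3.5$; in particular, the strength of the bound is driven entirely by the improvement from $2x - 3$ to $2x - 4$ in Theorem \ref{th:GiaroKubale} combined with the Mantel-free bound $2m = n \cdot d(G)$ substituted directly into the discriminant.
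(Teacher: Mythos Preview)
Your proof is correct and follows the same scheme as the paper: apply the Translation Lemma with a Kamalian-type bound on $W$, obtain a quadratic inequality in $t$, and check the resulting square-root expression against $2n+d(G)-3.5$. The only difference is that you feed in Theorem~\ref{th:GiaroKubale} ($f(x)=2x-4$, valid for $\vert V\vert\geq 3$) whereas the paper uses Theorem~\ref{th:Kamalian} ($f(x)=2x-3$, valid for $\vert V\vert\geq 2$); the paper's quadratic is $t^{2}-(2n-3)t-2n(d-1)\leq 0$, and the final algebraic check reduces to $n+d^{2}-4d+\tfrac{7}{4}\geq 0$, whose discriminant $9-4n$ is negative for $n\geq 3$. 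So both inputs land on exactly the same conclusion, and your closing remark that ``the strength of the bound is driven entirely by the improvement from $2x-3$ to $2x-4$'' is not accurate: the weaker $2x-3$ already suffices, and your sharper input simply gives a bit of slack that is absorbed in the last inequality.
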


\begin{proof}
Let $G$ be an $(n,m)$-graph ($n\geq 3$) and $\mathfrak{C}$
($\mathfrak{C^{\prime}}$) be the set of all cyclically interval
colorable (interval colorable) graphs with at least two
vertices. Also, let $t=W_{c}(G)$ and $d=d(G)$. 
By proceeding as above, 
we consider a cyclic
interval $t$-coloring $\alpha$ of $G$
and deduce, using Lemma \ref{ourlemma}, that
\begin{center}
$t\leq W\left(S_{\alpha,U}(G)\right)\leq 2\left\vert
V\left(S_{\alpha,U}(G)\right)\right\vert -3
\leq 2\left(n+\left\lfloor \frac{2m-n}{t}\right\rfloor\right)-3$.
\end{center}

On the other hand, since $2m\leq dn$, we obtain

\begin{center}
$t\leq 2\left(n+\left\lfloor
\frac{2m-n}{t}\right\rfloor\right)-3\leq
2\left(n+\frac{dn-n}{t}\right)-3$.
\end{center}

As above, this yields a quadratic inequality $t^{2}-(2n-3)t-2n(d-1)\leq 0$.
Thus,

\begin{center}
$W_{c}(G)=t\leq
\frac{1}{2}\left(2n-3+\sqrt{4n^{2}-(20-8d)n+9}\right)\leq 2\vert
V(G)\vert+d(G)-3.5$.
\end{center} 
\end{proof}

Since any planar graph has average degree less than $6$, we deduce the following from the preceding theorem.

\begin{corollary}
\label{translation-planar} If $G$ is a planar graph and
$G\in \mathfrak{N}_{c}$, then
\begin{center}
$W_{c}(G)\leq 2\vert V(G)\vert+2$.
\end{center}
\end{corollary}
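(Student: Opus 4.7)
The plan is to derive this corollary as an immediate consequence of Theorem \ref{translation-average-degree}, combined with the classical bound $|E(G)| \leq 3|V(G)|-6$ that follows from Euler's formula for every planar graph with at least three vertices.

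First, I would dispose of the trivial cases where $|V(G)| \leq 2$: any cyclically interval colorable graph on at most two vertices is a subgraph of $K_2$, and for these the inequality $W_c(G) \leq 2|V(G)|+2$ is immediate. Hence I may assume $|V(G)| \geq 3$, which places $G$ in the scope of Theorem \ref{translation-average-degree}. In this regime, Euler's formula gives $|E(G)| \leq 3|V(G)| - 6$, so the average degree satisfies
\[
d(G) \;=\; \frac{2|E(G)|}{|V(G)|} \;\leq\; 6 - \frac{12}{|V(G)|} \;<\; 6.
\]

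Substituting this strict bound into the conclusion of Theorem \ref{translation-average-degree} yields
\[
W_c(G) \;\leq\; 2|V(G)| + d(G) - 3.5 \;<\; 2|V(G)| + 2.5.
\]
Since $W_c(G)$ is a positive integer, the strict inequality $W_c(G) < 2|V(G)|+2.5$ forces $W_c(G) \leq 2|V(G)|+2$, which is exactly the desired bound.

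There is no genuine obstacle here; the argument is a one-step substitution. The only subtlety worth flagging is the passage from the strict real-valued inequality $< 2|V(G)| + 2.5$ to the integer inequality $\leq 2|V(G)| + 2$, which relies solely on the integrality of $W_c(G)$ and on the fact that the bound in Theorem \ref{translation-average-degree} is genuinely strict because $d(G) < 6$ (rather than $d(G) \leq 6$) for planar graphs.
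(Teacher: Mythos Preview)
Your proof is correct and follows essentially the same approach as the paper, which simply notes that any planar graph has average degree less than $6$ and invokes Theorem \ref{translation-average-degree}. One minor remark: the strictness $d(G)<6$ is not actually needed, since even $W_c(G)\leq 2|V(G)|+2.5$ together with integrality already forces $W_c(G)\leq 2|V(G)|+2$.
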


For triangle-free sparse graphs $G$ with a cyclic interval coloring we can prove a better upper bound on $W_c(G)$.

\begin{theorem}
\label{mytheorem3} 
Let $G\in \mathfrak{N}_{c}$ be a triangle-free $(n,m)$-graph, and
$a$ and $b$ positive numbers satisfying
$m\leq a\cdot n+b$. Then if $8b+1\leq
(3-4a)^{2}$, then
\begin{center}
$W_{c}(G)\leq \vert V(G)\vert+2a-2$.
\end{center}
\end{theorem}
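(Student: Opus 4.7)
The plan is to proceed exactly as in the proofs of Theorems \ref{mytheorem1} and \ref{mytheorem2}, invoking the Translation Lemma (Lemma \ref{ourlemma}) with $\mathfrak{C}$ the class of cyclically interval colorable triangle-free graphs and $\mathfrak{C}^{\prime}$ the class of interval colorable triangle-free graphs. The key structural observation is that splitting a vertex of a triangle-free graph cannot introduce a triangle, so for any $G\in\mathfrak{C}$ and any cyclic interval $W_c(G)$-coloring $\alpha$ with associated splitting set $U$, the graph $S_{\alpha,U}(G)$ lies in $\mathfrak{C}^{\prime}$. Thus, by Theorem \ref{th:AsratianKamalian}, the hypotheses of Lemma \ref{ourlemma} hold with $f(n')=n'-1$.

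Writing $t=W_c(G)$ and $n=|V(G)|$, Lemma \ref{ourlemma} yields
$$t \leq n-1+\left\lfloor \frac{2m-n}{t}\right\rfloor.$$
I would next substitute the sparsity bound $m\leq an+b$ to turn this into a quadratic inequality of the form
$$t^{2}-(n-1)t-(2a-1)n-2b\leq 0.$$
Solving this via the quadratic formula gives
$$t \leq \tfrac{1}{2}\left((n-1)+\sqrt{n^{2}+(8a-6)n+1+8b}\right),$$
and I would verify that this simplifies to $t\leq n+2a-2$ exactly when $8b+1\leq(3-4a)^{2}$. Indeed, rearranging the desired bound $t\leq n+2a-2$ as $\sqrt{n^{2}+(8a-6)n+1+8b}\leq n+4a-3$ and squaring, the $n^{2}$- and $n$-terms cancel identically and what survives is precisely the stated hypothesis; this is the reason for its specific form.

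Once the Translation Lemma is in place, there is no substantive obstacle: the argument is completely parallel to Theorem \ref{mytheorem1}, with Mantel's bound $2m\leq n^{2}/2$ replaced by the linear sparsity bound $m\leq an+b$. The only item that would require a brief separate check is the sign condition $n+4a-3\geq 0$ needed to justify the squaring step. This is automatic when $a\geq 3/4$, and in the complementary regime the hypothesis $8b+1\leq(3-4a)^{2}$ together with $b>0$ already restricts $a$ to $a<1/2$, where a direct verification using the small values of $n$ allowed by the hypothesis shows that the bound either holds trivially or falls within the range in which squaring is legitimate.
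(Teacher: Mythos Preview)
Your proposal is correct and follows essentially the same route as the paper: invoke the Translation Lemma with $\mathfrak{C}$ and $\mathfrak{C}'$ the cyclically interval colorable and interval colorable triangle-free graphs, apply Theorem~\ref{th:AsratianKamalian} to obtain $t\leq n-1+\lfloor(2m-n)/t\rfloor$, substitute $m\leq an+b$, solve the resulting quadratic $t^{2}-(n-1)t-(n(2a-1)+2b)\leq 0$, and use the hypothesis $8b+1\leq(3-4a)^{2}$ to bound the discriminant. Your treatment is in fact slightly more careful than the paper's, which does not explicitly address the sign condition $n+4a-3\geq 0$ needed for the squaring step.
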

\begin{proof}
Let $G$ be a triangle-free $(n,m)$-graph ($m\leq a\cdot n+b$) and
$\mathfrak{C}$ ($\mathfrak{C^{\prime}}$) be the set of all
cyclically interval colorable (interval colorable) triangle-free
graphs. Also, let $t=W_{c}(G)$. Consider a cyclic interval
$t$-coloring $\alpha$ of $G$. 
As in the proof of Theorem \ref{mytheorem1}, we have that

\begin{center}
$t\leq n+\left\lfloor
\frac{2m-n}{t}\right\rfloor-1$.
\end{center}

On the other hand, since $m\leq a\cdot n+b$, we obtain

\begin{center}
$t\leq n+\left\lfloor \frac{2m-n}{t}\right\rfloor-1\leq
n+\frac{2an+2b-n}{t}-1$.
\end{center}
This yields a quadratic inequality
$t^{2}-(n-1)t-\left(n(2a-1)+2b\right)\leq 0$, implying that

\begin{center}
$W_{c}(G)=t\leq
\frac{1}{2}\left(n-1+\sqrt{n^{2}-2n(3-4a)+8b+1}\right)$.
\end{center}

Now taking into account that $8b+1\leq (3-4a)^{2}$, we obtain

\begin{center}
$W_{c}(G)\leq
\frac{1}{2}\left(n-1+\sqrt{n^{2}-2n(3-4a)+8b+1}\right)\leq \vert
V(G)\vert+2a-2$.
\end{center} 
\end{proof}

\begin{corollary}
\label{mycorollary2.1} If $G$ is a triangle-free planar graph and
$G\in \mathfrak{N}_{c}$, then
\begin{center}
$W_{c}(G)\leq \vert V(G)\vert+2$.
\end{center}
\end{corollary}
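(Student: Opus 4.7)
The plan is to derive this corollary directly from Theorem \ref{mytheorem3} by choosing appropriate constants $a$ and $b$. The key input is the classical bound on the number of edges of a triangle-free planar graph, after which the corollary is just a verification that the hypotheses of Theorem \ref{mytheorem3} are satisfied.

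First, I would establish the edge bound. Since $G$ is triangle-free, every face in any planar embedding of $G$ has length at least $4$, so counting edge-face incidences yields $2m \geq 4f$, where $f$ is the number of faces. Combining this with Euler's formula $n - m + f = 2$ gives the standard bound $m \leq 2n - 4$.

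Next, I would choose the constants for Theorem \ref{mytheorem3}. Since we want the conclusion to read $W_c(G) \leq |V(G)| + 2$, we need $2a - 2 = 2$, forcing $a = 2$. With this value, the hypothesis $8b + 1 \leq (3 - 4a)^2 = 25$ becomes $b \leq 3$, and any positive $b$ in this range automatically satisfies $m \leq 2n - 4 \leq 2n + b$. Taking for instance $b = 1$, all hypotheses of Theorem \ref{mytheorem3} are met, and its conclusion immediately gives $W_c(G) \leq |V(G)| + 2a - 2 = |V(G)| + 2$. There is no real obstacle here; the main content is packaged inside Theorem \ref{mytheorem3}, and this corollary just amounts to plugging in the best available linear upper bound on $|E(G)|$ for the class under consideration.
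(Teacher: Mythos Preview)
Your proposal is correct and matches the paper's intended argument: the corollary is stated immediately after Theorem \ref{mytheorem3} without proof, so the intended derivation is exactly to plug in the standard bound $m\leq 2n-4$ for triangle-free planar graphs and choose $a=2$ together with any positive $b\leq 3$. The only minor quibble is that your Euler-formula justification of $m\leq 2n-4$ tacitly assumes $n\geq 3$ and that every face has length at least $4$ (including the outer face), which needs care for disconnected or very small graphs; but in those degenerate cases the inequality $m\leq 2n+b$ and the conclusion $W_c(G)\leq |V(G)|+2$ are immediate anyway.
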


\begin{corollary}
\label{mycorollary2.2} If $G$ is a triangle-free outerplanar graph
with at least two vertices and $G\in \mathfrak{N}_{c}$, then
\begin{center}
$W_{c}(G)\leq \vert V(G)\vert+1$.
\end{center}
\end{corollary}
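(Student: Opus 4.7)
The plan is to apply Theorem~\ref{mytheorem3} with parameters $a = 3/2$ and $b = 1$. With these choices, $n + 2a - 2 = n + 1$, and the hypothesis $8b + 1 \leq (3-4a)^2$ reduces to $9 \leq 9$, which holds with equality. Hence the corollary will follow as soon as the edge-count inequality $m \leq \tfrac{3}{2}n + 1$ is verified for triangle-free outerplanar $(n,m)$-graphs with $n \geq 2$; in fact I would establish the sharper bound $m \leq (3n-4)/2$.

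For a 2-connected triangle-free outerplanar graph, the sharper bound follows from a standard face-counting argument. Fix an outerplanar embedding of $G$; then the outer face is bounded by a Hamilton cycle of length $n$, and since $G$ is triangle-free each inner face has length at least $4$. Combining the identity $\sum_F \ell(F) = 2m$ with Euler's formula $f = m - n + 2$ gives
\begin{equation*}
2m \geq n + 4(m - n + 1),
\end{equation*}
which rearranges to $m \leq (3n-4)/2$.

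For a general triangle-free outerplanar graph $G$, I decompose $G$ into its blocks. Each block is either a bridge (with $n_i = 2$, $m_i = 1$) or a 2-connected triangle-free outerplanar graph of order at least $4$; in both cases the per-block inequality $m_i \leq (3n_i - 4)/2$ holds, with equality for bridges. Using the identity $\sum_i n_i = n + k - \ell$ for a graph with $k$ blocks and $\ell$ components, together with $\sum_i m_i = m$, summing the per-block bounds yields $m \leq (3n - 4)/2$ for $G$; isolated vertices only lower the edge density further and require no special treatment. The main obstacle, if any, is the bookkeeping for the block-decomposition step; the face-counting argument and the final invocation of Theorem~\ref{mytheorem3} are then routine.
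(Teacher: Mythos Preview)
Your proposal is correct and follows exactly the approach intended by the paper: Corollary~\ref{mycorollary2.2} is stated there without proof as an immediate consequence of Theorem~\ref{mytheorem3}, using the standard edge bound for triangle-free outerplanar graphs, and you supply precisely that derivation with the choice $a=3/2$, $b=1$. The only difference is that you also include a self-contained proof of the inequality $m\le (3n-4)/2$ via face-counting on $2$-connected pieces and a block decomposition, which the paper simply takes for granted; your bookkeeping there is fine (isolated vertices are harmless, and the identity $\sum_i n_i = n + k - \ell$ together with $k\ge \ell$ gives the claimed bound).
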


We note that the last two results are almost sharp since
$W_{c}(C_{n}) = n$, where $C_{n}$ is a cycle on $n$ vertices.
\bigskip


\section{Bounds on $W(G)$}\

In this section we prove some bounds on $W(G)$ for interval
colorable multigraphs $G$. Let us first recall
some other bounds on $W(G)$ that appear in the literature.

The first lower bound
on $W(G)$ for interval colorable regular graphs $G$ was obtained by
Kamalian \cite{KamalianThesis}. In particular, he proved that if $G$
is a regular graph with $\chi^{\prime}(G)=\Delta(G)$ and $3\leq
\vert V(G)\vert\leq 2^{\Delta(G)}+1$, then $W(G)\geq
\Delta(G)+\left\lfloor \log_{2}\left(\vert
V(G)\vert-1\right)\right\rfloor$. In \cite{Axenovich}, Axenovich
proved that if $G$ is a planar graph and $G\in \mathfrak{N}$, then
$W(G)\leq \frac{11}{6}\vert V(G)\vert$, and
conjectured that for all interval colorable planar graphs, $W(G)\leq
\frac{3}{2}\vert V(G)\vert$. In \cite{AsratianKamalian2}, Asratian
and Kamalian proved that if $G$ is connected and $G\in
\mathfrak{N}$, then 
$$W(G)\leq
\left(\mathrm{diam}(G)+1\right)\left(\Delta(G)-1\right) +1.$$ 
If we, in addition,
assume that $G$ is bipartite, then this bound can be improved to
$W(G)\leq \mathrm{diam}(G)\left(\Delta(G)-1\right) +1$. Recently,
Kamalian and Petrosyan \cite{KamalianPetrosyan} showed that these
upper bounds cannot be significantly improved. In this section we
shall derive a similar upper bound on $W(G)$ for interval colorable multigraphs $G$ based on the circumference of $G$. First we give a
short proof of Theorem \ref{th:Kamalian} based on Theorem
\ref{th:AsratianKamalian}; the original proof by Kamalian
is rather lengthy.\\

\begin{proof}[Proof of Theorem \ref{th:Kamalian}.]
Let
$V(G)=\{v_{1},v_{2},\ldots,v_{n}\}$ and $\alpha$ be an interval
$W(G)$-coloring of the graph $G$.
 Define an auxiliary graph $H$ as follows:
\begin{center}
$V(H)=U\cup W$, where
\end{center}
\begin{center}
$U=\{u_{1},u_{2},\ldots,u_{n}\}$, $W=\{w_{1},w_{2},\ldots,w_{n}\}$
and
\end{center}
\begin{center}
$E(H)=\left\{u_{i}w_{j},u_{j}w_{i}\colon\,v_{i}v_{j}\in E(G), 1\leq i\leq n,1\leq j\leq n\right\}\cup \{u_{i}w_{i}\colon\,1\leq i\leq n\}$.
\end{center}

Clearly, $H$ is a 
bipartite graph with $\vert V(H)\vert = 2\vert V(G)\vert$.\\

Define an edge-coloring $\beta$ of $H$ as follows:
\begin{description}
\item[(1)] for every edge $v_{i}v_{j}\in E(G)$, let $\beta (u_{i}w_{j})=\beta(u_{j}w_{i})=\alpha(v_{i}v_{j})+1$,

\item[(2)] for $i=1,2,\ldots,n$, let $\beta(u_{i}w_{i})=\overline S(v_{i},\alpha)+2$.
\end{description}

It is easy to see that $\beta$ is an edge-coloring of the graph $H$ with colors $2,3,\ldots,W(G)+2$ and $\underline S(u_{i},\beta)=\underline
S(w_{i},\beta)$ for $i=1,2,\ldots,n$. Now we present an interval
$(W(G)+2)$-coloring of the graph $H$. For that we take one edge
$u_{i_{0}}w_{i_{0}}$ with $\underline S\left(u_{i_{0}},\beta\right)=\underline
S\left(w_{i_{0}},\beta\right)=2$, and recolor it with color $1$. Clearly, such a coloring is an interval $(W(G)+2)$-coloring of $H$. 
Since $H$ is a 
bipartite graph and $H\in \mathfrak{N}$, by Theorem
\ref{th:AsratianKamalian}, we have
\begin{center}
$W(G)+2\leq \vert V(H)\vert -1 = 2\vert V(G)\vert-1$, thus
\end{center}
\begin{center}
$W(G)\leq 2\vert V(G)\vert-3$.
\end{center} 
\end{proof}

\begin{theorem}
\label{mytheorem4} If $G$ is a $2$-connected multigraph and $G\in
\mathfrak{N}$, then
\begin{center}
$W(G)\leq 1+\left\lfloor \frac{c(G)}{2}\right\rfloor(\Delta(G)-1)$.
\end{center}
\end{theorem}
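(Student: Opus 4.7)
The plan is to fix an interval $W(G)$-coloring $\alpha$ with $t = W(G)$ colors, choose edges $e_1, e_t \in E(G)$ with $\alpha(e_1) = 1$ and $\alpha(e_t) = t$, and bound $t$ by tracking how fast the color of an edge can change along a path that lies inside a short cycle through $e_1$ and $e_t$. The basic tool is the standard observation that at every vertex $v$ the set $S(v,\alpha)$ is an interval of $d_G(v) \le \Delta(G)$ consecutive integers, so any two edges incident to $v$ differ in color by at most $\Delta(G)-1$. Consequently, along any path with edges $f_0, f_1, \ldots, f_k$ traversed in order, $|\alpha(f_k)-\alpha(f_0)| \le k(\Delta(G)-1)$.

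The heart of the argument is to exhibit a cycle $C$ of length at most $c(G)$ passing through both $e_1$ and $e_t$. Since $G$ is $2$-connected, I would produce $C$ by subdividing $e_1$ and $e_t$ via new degree-$2$ vertices $z_1$ and $z_t$: the resulting multigraph $G'$ remains $2$-connected (subdivision preserves $2$-connectivity), so $z_1$ and $z_t$ lie on a common cycle in $G'$, and undoing the subdivisions returns a cycle $C$ in $G$ containing $e_1$ and $e_t$ with $|C| \le c(G)$. Deleting $e_1$ and $e_t$ then splits $C$ into two internally vertex-disjoint paths $P_1, P_2$ between the endpoints of $e_1$ and $e_t$ with $|P_1| + |P_2| = |C| - 2 \le c(G) - 2$.

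Letting $p = \min(|P_1|, |P_2|)$, one has $p \le \lfloor (c(G)-2)/2 \rfloor = \lfloor c(G)/2 \rfloor - 1$. Concatenating $e_1$, the shorter of the two paths, and $e_t$ produces a path in $G$ of length $p + 2$ whose first edge is colored $1$ and whose last edge is colored $t$, and the basic observation above yields
\[
t - 1 \;=\; |\alpha(e_t) - \alpha(e_1)| \;\le\; (p + 1)(\Delta(G) - 1) \;\le\; \lfloor c(G)/2 \rfloor (\Delta(G) - 1),
\]
which rearranges to the desired inequality.

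The main obstacle is the assertion that any two edges of a $2$-connected multigraph lie on a common cycle, which is handled by the subdivision trick above; beyond that the argument is a routine calculation. A brief verification is needed for the degenerate cases (parallel edges, $e_1$ and $e_t$ sharing a vertex, or $|C| = 2$), all of which fit the same framework with $|P_1| = 0$.
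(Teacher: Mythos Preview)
Your proof is correct and follows essentially the same approach as the paper's: pick edges $e,e'$ with colors $1$ and $W(G)$, find a cycle $C$ through both (the paper simply asserts this from $2$-connectivity, while you justify it via the subdivision trick), take the shorter of the two arcs of $C\setminus\{e,e'\}$, and telescope the inequality $|\alpha(f_{i+1})-\alpha(f_i)|\le \Delta(G)-1$ along the resulting walk. The only cosmetic difference is that the paper counts vertices on the short arc (their $s\le\lfloor c(G)/2\rfloor$) whereas you count edges (your $p\le\lfloor c(G)/2\rfloor-1$), which amounts to the same bound.
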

\begin{proof} Consider an interval $W(G)$-coloring
$\alpha $ of $G$. In the coloring $\alpha$ of $G$, we consider the
edges with colors $1$ and $W(G)$. Let $e=uv$,
$e^{\prime}=u^{\prime}v^{\prime}$ and $\alpha(e)=1$,
$\alpha(e^{\prime})=W(G)$. Since $G$ is $2$-connected, there is a
cycle $C$ that contains both edges $e$ and $e^{\prime}$. 
Clearly, $\vert V(C)\vert\leq c(G)$. We
label the vertices of $C$ in two directions: from $u$ to
$u^{\prime}$ and from $v$ to $v^{\prime}$. Let
$P=u_{1},\ldots,u_{s}$ and $Q=v_{1},\ldots,v_{t}$ be two paths on $C$
from
$u$ to $u^{\prime}$ and from $v$ to $v^{\prime}$, respectively,
where $u_{1}=u,u_{s}=u^{\prime}$ and $v_{1}=v,v_{t}=v^{\prime}$
($s,t\geq 1$). Clearly, $\min\{s,t\}\leq \left\lfloor
\frac{c(G)}{2}\right\rfloor$. Without loss of generality we may
assume that $s\leq \left\lfloor \frac{c(G)}{2}\right\rfloor$.

Since $\alpha$ is an interval $W(G)$-coloring of $G$, we have

\begin{center}
$\alpha(u_{1}u_{2})\leq d_{G}(u_{1})$,

$\alpha(u_{2}u_{3})\leq \alpha(u_{1}u_{2})+ d_{G}(u_{2})-1$,

$\cdots \cdots \cdots \cdots \cdots \cdots$

$\alpha(u_{i}u_{i+1})\leq \alpha(u_{i-1}u_{i})+ d_{G}(u_{i})-1$,

$\cdots \cdots \cdots \cdots \cdots \cdots$

$\alpha(u_{s-1}u_{s})\leq \alpha(u_{s-2}u_{s-1})+ d_{G}(u_{s-1})-1$,

$W(G)=\alpha(e^{\prime})=\alpha(u^{\prime}v^{\prime})\leq
\alpha(u_{s-1}u_{s})+ d_{G}(u_{s})-1$.

\end{center}

Summing up these inequalities, we obtain

\begin{center}
$W(G)\leq 1+{\sum\limits_{i=1}^{s}\left(d_{G}(u_{i})-1\right)}\leq
1+\left\lfloor\frac{c(G)}{2}\right\rfloor(\Delta(G)-1)$.
\end{center} 
\end{proof}

\begin{corollary}
\label{mycorollary3.1} If $G$ is a $2$-connected multigraph and
$G\in \mathfrak{N}$, then
\begin{center}
$W(G)\leq 1+\left\lfloor \frac{\vert
V(G)\vert}{2}\right\rfloor(\Delta(G)-1)$.
\end{center}
\end{corollary}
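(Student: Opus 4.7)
The plan is to obtain Corollary \ref{mycorollary3.1} as an immediate consequence of Theorem \ref{mytheorem4}. Every cycle in a multigraph visits distinct vertices, so the circumference satisfies $c(G)\leq |V(G)|$; taking the floor of the halved value and then multiplying by the nonnegative factor $\Delta(G)-1$ (which is at least $1$ since a $2$-connected multigraph has minimum degree at least $2$) yields
\[
1+\left\lfloor \frac{c(G)}{2}\right\rfloor(\Delta(G)-1)\;\leq\; 1+\left\lfloor \frac{|V(G)|}{2}\right\rfloor(\Delta(G)-1).
\]
Combining this monotonicity step with the bound supplied by Theorem \ref{mytheorem4}, namely $W(G)\leq 1+\lfloor c(G)/2\rfloor(\Delta(G)-1)$, produces the claimed inequality.

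I expect no real obstacle here. All substantive work has already been carried out in the proof of Theorem \ref{mytheorem4}: selecting edges $e$ and $e'$ with colors $1$ and $W(G)$, invoking $2$-connectivity to find a cycle $C$ through both edges, labeling the shorter of the two arcs of $C$ between the endpoints, and telescoping the local ``next-edge'' inequalities $\alpha(u_iu_{i+1})\leq \alpha(u_{i-1}u_i)+d_G(u_i)-1$ to obtain a bound in terms of the degrees along that arc. The corollary is simply a cosmetic weakening that replaces the sharper but less transparent parameter $c(G)$ by the immediately accessible order $|V(G)|$, and the only thing to verify is the trivial monotonicity displayed above.
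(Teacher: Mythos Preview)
Your proposal is correct and matches the paper's approach exactly: the paper states Corollary~\ref{mycorollary3.1} immediately after Theorem~\ref{mytheorem4} with no separate proof, so it is intended as the direct consequence you describe, obtained from $c(G)\leq |V(G)|$.
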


\begin{corollary}
\label{mycorollary3.2} If $G$ is a $2$-connected multigraph with
$\Delta(G)\leq 4$ and $G\in \mathfrak{N}$, then
\begin{center}
$W(G)\leq 3\left\lfloor \frac{\vert V(G)\vert}{2}\right\rfloor+1$.
\end{center}
\end{corollary}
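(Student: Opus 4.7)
The plan is to derive Corollary \ref{mycorollary3.2} as an immediate consequence of Corollary \ref{mycorollary3.1}, which in turn is the specialization of Theorem \ref{mytheorem4} obtained by using the trivial bound $c(G) \leq |V(G)|$ on the circumference. Since the hypotheses of Corollary \ref{mycorollary3.1} are exactly those of Corollary \ref{mycorollary3.2} (namely, $G$ is a $2$-connected multigraph with $G \in \mathfrak{N}$), no further structural argument is needed.

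First I would invoke Corollary \ref{mycorollary3.1} to obtain
\[
W(G) \leq 1 + \left\lfloor \frac{|V(G)|}{2} \right\rfloor (\Delta(G) - 1).
\]
Then I would substitute the hypothesis $\Delta(G) \leq 4$, which gives $\Delta(G) - 1 \leq 3$. Since $\left\lfloor |V(G)|/2 \right\rfloor$ is a nonnegative integer, the inequality is monotone in $\Delta(G) - 1$, and we conclude
\[
W(G) \leq 1 + 3 \left\lfloor \frac{|V(G)|}{2} \right\rfloor,
\]
which is the desired bound.

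Since this is a direct arithmetic substitution into an already-proved inequality, there is no genuine obstacle; the only subtlety is that one must note the bound in Corollary \ref{mycorollary3.1} is indeed monotone non-decreasing in $\Delta(G)$, so replacing the exact value of $\Delta(G) - 1$ by the upper bound $3$ is legitimate. In the write-up I would present this as a one- or two-line deduction.
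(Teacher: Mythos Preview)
Your proposal is correct and matches the paper's approach: the corollary is stated without proof in the paper, as it follows immediately from Corollary~\ref{mycorollary3.1} by substituting $\Delta(G)\leq 4$. Your one-line deduction is exactly what is intended.
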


\begin{corollary}
\label{mycorollary3.3} If $G$ is a $2$-connected planar graph with
$\Delta(G)\leq 4$ and $G\in \mathfrak{N}$, then
\begin{center}
$W(G)\leq \frac{3}{2}\vert V(G)\vert$.
\end{center}
\end{corollary}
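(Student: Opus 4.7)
The plan is to derive the corollary from Corollary~\ref{mycorollary3.2} together with Theorem~\ref{mytheorem4}, proceeding by case analysis on the parity of $|V(G)|$.

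First, I would dispatch the case when $|V(G)|$ is odd: Corollary~\ref{mycorollary3.2} immediately yields $W(G) \leq 3\lfloor |V(G)|/2 \rfloor + 1 = (3|V(G)|-1)/2 \leq \tfrac{3}{2}|V(G)|$, and there is nothing further to do.

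The more delicate case is when $|V(G)|$ is even, where Corollary~\ref{mycorollary3.2} gives only $W(G) \leq \tfrac{3}{2}|V(G)| + 1$, one unit above the target. To save this extra unit, I would split further: if $c(G)\leq |V(G)|-1$, then Theorem~\ref{mytheorem4} applied with $\Delta(G)\leq 4$ gives $W(G)\leq 1 + 3\lfloor (|V(G)|-1)/2\rfloor = \tfrac{3}{2}|V(G)|-2$, well within the bound. The remaining subcase is therefore that $|V(G)|$ is even and $G$ is Hamiltonian. Here I plan to use planarity: any $4$-regular planar graph must contain a triangle, since a triangle-free planar graph has at most $2|V(G)|-4$ edges, incompatible with $|E(G)|=2|V(G)|$. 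The presence of such a triangle gives a local shortcut which, combined with the Hamiltonian cycle, should let me produce a cycle of length at most $|V(G)|-1$ through the two edges $e$ and $e'$ colored $1$ and $W(G)$ in an extremal interval $W(G)$-coloring, reducing to the previous subcase.

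The step I expect to be the main obstacle is precisely this final reduction: given arbitrary edges $e,e'$ in a $2$-connected planar $4$-regular Hamiltonian graph with $|V(G)|$ even, producing a cycle through both edges of length strictly less than $|V(G)|$. For non-$4$-regular graphs, one may instead try to arrange that the shorter arc of the Hamiltonian cycle through $e,e'$ contains a vertex of degree less than~$4$, which would directly improve the path-sum estimate driving the proof of Theorem~\ref{mytheorem4}; the strictly $4$-regular case, however, genuinely requires exploiting the planar triangle structure to construct the desired shorter cycle between the two coloring-determined edges.
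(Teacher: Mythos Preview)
Your reduction to the case $|V(G)|=2n$ even and $G$ Hamiltonian is correct, but the step you yourself flag as the ``main obstacle'' is a genuine gap, and your proposed attack via planar triangles does not obviously close it: there is no mechanism guaranteeing that a triangle-shortcut yields a cycle of length at most $2n-1$ through the two \emph{specific} edges $e,e'$ of colors $1$ and $W(G)$, nor that low-degree vertices (when they exist) lie on the shorter of the two arcs of your chosen Hamiltonian cycle. Both hoped-for configurations can fail.

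The paper bypasses this structural question entirely by analysing the colouring directly. Assuming $W(G)=3n+1$, it takes a cycle $C$ through $e,e'$ and the two arcs $P=u_1,\dots,u_s$ and $Q=v_1,\dots,v_t$ as in the proof of Theorem~\ref{mytheorem4}; if either arc had fewer than $n$ vertices the chain of inequalities there would already give $W(G)\leq 3n$, so $s=t=n$ and every inequality along both arcs is tight. Tightness forces $d_G(u_i)=d_G(v_i)=4$ and $\alpha(u_iu_{i+1})=\alpha(v_iv_{i+1})=3i+1$, hence $\overline S(u_i,\alpha)=\overline S(v_i,\alpha)=3i+1$ for every $i$. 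In particular the colour $3n$ can appear only at $u_n$ and $v_n$, so the (necessarily present) edge of colour $3n$ joins $u_n$ to $v_n$; but $e'=u_nv_n$ already carries colour $3n+1$, forcing parallel edges and contradicting that $G$ is a simple graph. Notice that planarity is never used --- only simplicity --- so the paper's argument actually proves the bound for all $2$-connected simple graphs with $\Delta\leq 4$, whereas your proposed route leans essentially on the planar hypothesis.
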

\begin{proof} First let us note that Corollary \ref{mycorollary3.2} implies that the statement is true for planar graphs with an odd number of vertices. Moreover, we have $W(G)\leq \frac{3}{2}\vert V(G)\vert+1$. Assume that $\vert V(G)\vert =2n$ ($n\in \mathbb{N}$). Consider an interval $(3n+1)$-coloring
$\alpha $ of $G$. As in the proof of Theorem \ref{mytheorem4}, we consider the edges with colors $1$ and $3n+1$. Let $e=uv$,
$e^{\prime}=u^{\prime}v^{\prime}$ and $\alpha(e)=1$,
$\alpha(e^{\prime})=3n+1$. Since $G$ is $2$-connected, there is a
cycle $C$ that contains both edges $e$ and $e^{\prime}$. 
Clearly, $\vert V(C)\vert\leq 2n$. We
label the vertices of $C$ in two directions: from $u$ to
$u^{\prime}$ and from $v$ to $v^{\prime}$. Let
$P=u_{1},\ldots,u_{s}$ and $Q=v_{1},\ldots,v_{t}$ be two paths on $C$ from
$u$ to $u^{\prime}$ and from $v$ to $v^{\prime}$, respectively,
where $u_{1}=u,u_{s}=u^{\prime}$ and $v_{1}=v,v_{t}=v^{\prime}$
($s,t\geq 1$). It is easy to see that 
$s=t=\frac{\vert V(G)\vert}{2}=n$ (otherwise, by considering the shortest path, we obtain that $W(G)\leq 3n$). Moreover, we have

\begin{center}
$\alpha(u_{1}u_{2})=\alpha(v_{1}v_{2})=4$,

$\alpha(u_{2}u_{3})=\alpha(v_{2}v_{3})=7$,

$\cdots \cdots \cdots \cdots \cdots \cdots$

$\alpha(u_{i}u_{i+1})=\alpha(v_{i}u_{i+1})=3i+1$,

$\cdots \cdots \cdots \cdots \cdots \cdots$

$\alpha(u_{s-1}u_{s})=\alpha(v_{t-1}v_{t})=3n-2$,

$\alpha(e^{\prime})=\alpha(u^{\prime}v^{\prime})=\alpha(u_{s}v_{t})=\alpha(u_{n}v_{n})=3n+1$.
\end{center}

This implies that $\overline S\left(u_{i},\alpha\right)=\overline
S\left(v_{i},\alpha\right)=3i+1$ for $i=1,\ldots,n$. Let us consider 
the edges with color $3n$. These edges can only be incident to 
vertices $u_{n}$ and $v_{n}$, since 
$\overline S\left(w,\alpha\right)<3n$ for any 
$w\in V(G)\setminus\{u_{n},v_{n}\}$. This implies that 
the edges with colors $3n$ and $3n+1$ must be parallel, which is a contradiction.
\end{proof}

The last corollary above shows that Axenovich's conjecture 
\cite{Axenovich} is true
for all interval colorable $2$-connected planar graphs with
maximum degree at most $4$.

It is well-known that if a connected regular multigraph $G$ has a
cut-vertex, then $\chi^{\prime}(G)>\Delta(G)$. Moreover, by
Proposition \ref{proposition}, if $G$ is interval colorable, then
$\chi'(G) = \Delta(G)$. Thus, if a connected regular multigraph 
is interval colorable, then 
it is $2$-connected. Hence, we have the
following two results.

\begin{corollary}
\label{mycorollary3.4} If $G$ is a connected $r$-regular multigraph
and $G\in \mathfrak{N}$, then
\begin{center}
$W(G)\leq 1+\left\lfloor \frac{\vert
V(G)\vert}{2}\right\rfloor(r-1)$.
\end{center}
\end{corollary}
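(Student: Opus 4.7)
The plan is to deduce Corollary \ref{mycorollary3.4} immediately from Corollary \ref{mycorollary3.1}. Since the latter gives $W(G)\leq 1+\lfloor|V(G)|/2\rfloor(\Delta(G)-1)$ for any $2$-connected interval-colorable multigraph, substituting $\Delta(G)=r$ yields exactly the bound claimed in Corollary \ref{mycorollary3.4}. Thus the only thing to establish is that $G$ is $2$-connected.

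To prove $2$-connectivity, I would argue by contradiction. Suppose the connected $r$-regular multigraph $G$ has a cut-vertex $v$, and let $C_{1},\ldots,C_{k}$ (with $k\geq 2$) be the components of $G-v$. Since $G\in\mathfrak{N}$, Proposition \ref{proposition} gives $\chi^{\prime}(G)=\Delta(G)=r$, so $G$ admits a proper $r$-edge-coloring. Because $G$ is $r$-regular and uses only $r$ colors, every color class must be a perfect matching of $G$. A short parity argument now yields a contradiction: if some $|V(C_{i})|$ is odd, then every color class must include one of the edges between $v$ and $C_{i}$ (otherwise its restriction to $V(C_{i})$ could not saturate an odd set); this forces $v$ to have $r$ edges into $C_{i}$ alone, leaving no edges for the remaining components and contradicting $k\geq 2$. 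If instead every $|V(C_{i})|$ is even, then $|V(G)|=1+\sum_{i}|V(C_{i})|$ is odd, so $G$ has no perfect matching, again contradicting the existence of the $r$-edge-coloring. Hence $G$ is $2$-connected.

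Having established $2$-connectivity, a single application of Corollary \ref{mycorollary3.1} completes the proof. The only mildly delicate piece of the argument is the parity-based derivation of $\chi^{\prime}(G)>\Delta(G)$ in the presence of a cut-vertex; however, this fact is stated as well known in the text immediately preceding the corollary, so in a polished writeup I would either cite a standard reference on edge colorings of regular multigraphs or supply the short argument sketched above.
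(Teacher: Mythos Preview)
Your proposal is correct and follows essentially the same approach as the paper: the text immediately preceding the corollary argues that a connected regular interval-colorable multigraph must be $2$-connected (via the well-known fact that a cut-vertex in a regular multigraph forces $\chi'(G)>\Delta(G)$, together with Proposition~\ref{proposition}), and then Corollary~\ref{mycorollary3.1} is invoked. The only difference is that you supply the short parity argument for the ``well-known'' fact rather than merely citing it.
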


\begin{corollary}
\label{mycorollary3.5} If $G$ is a connected cubic multigraph and
$G\in \mathfrak{N}$, then
\begin{center}
$W(G)\leq \vert V(G)\vert+1$.
\end{center}
\end{corollary}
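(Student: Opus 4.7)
The plan is to deduce this corollary as an immediate consequence of Corollary \ref{mycorollary3.4} applied with $r=3$, combined with an elementary parity observation about cubic multigraphs.

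First I would invoke Corollary \ref{mycorollary3.4}: since $G$ is a connected $3$-regular multigraph with $G \in \mathfrak{N}$, we obtain
\begin{center}
$W(G) \leq 1 + \left\lfloor \frac{|V(G)|}{2} \right\rfloor (3-1) = 1 + 2\left\lfloor \frac{|V(G)|}{2} \right\rfloor.$
\end{center}
Next I would observe that in a cubic multigraph, the handshake identity $\sum_{v \in V(G)} d_G(v) = 2|E(G)|$ gives $3|V(G)| = 2|E(G)|$, which forces $|V(G)|$ to be even. Therefore $\left\lfloor \frac{|V(G)|}{2} \right\rfloor = \frac{|V(G)|}{2}$, and substituting into the previous inequality yields
\begin{center}
$W(G) \leq 1 + 2 \cdot \frac{|V(G)|}{2} = |V(G)| + 1,$
\end{center}
as required.

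Since the proof is essentially a specialization of an already-proved corollary, there is no substantive obstacle to overcome; the only subtlety is recognizing that the parity of $|V(G)|$ in a cubic multigraph removes the floor function and converts the generic bound into the clean form $|V(G)|+1$. No further machinery is needed.
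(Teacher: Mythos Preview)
Your proof is correct and follows essentially the same approach as the paper: Corollary~\ref{mycorollary3.5} is stated immediately after Corollary~\ref{mycorollary3.4} as the direct specialization to $r=3$, with the parity observation left implicit. (In fact the inequality $1 + 2\lfloor |V(G)|/2 \rfloor \leq |V(G)|+1$ holds regardless of parity, so the handshake argument, while correct, is not strictly needed for the bound itself.)
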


We now show that all aforementioned upper bounds are sharp.

\begin{proposition}
\label{myproposition5} For any integers $n,r\geq 2$, there exists a $2$-connected multigraph $G$ with $\vert V(G)\vert =2n$ and
$\Delta(G)=r$ such that $G\in \mathfrak{N}$ and $W(G)=1+n(r-1)$.
\end{proposition}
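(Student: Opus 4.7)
The plan is to exhibit, for each $n, r \geq 2$, an explicit 2-connected multigraph $G$ on $2n$ vertices with $\Delta(G) = r$ that admits an interval coloring using exactly $1 + n(r-1)$ colors. Combined with the matching upper bound from Corollary~\ref{mycorollary3.1}, this yields $W(G) = 1 + n(r-1)$. The construction I have in mind is a \emph{thickened ladder}: take vertex set $\{u_1, \dots, u_n, v_1, \dots, v_n\}$, include the rail edges $u_i u_{i+1}$ and $v_i v_{i+1}$ for $1 \leq i \leq n-1$, and between each pair $\{u_i, v_i\}$ install parallel rung edges---exactly $r-1$ copies when $i \in \{1, n\}$ and exactly $r-2$ copies when $2 \leq i \leq n-1$. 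A direct count confirms $|V(G)| = 2n$ and that every vertex has degree $r$, and 2-connectivity follows because removing any single vertex still leaves the remaining rails and rungs connected.

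For the interval coloring $\alpha$, I would force the color set at each of $u_i$ and $v_i$ to be the interval $I_i := \{1 + (i-1)(r-1),\, 2 + (i-1)(r-1),\, \dots,\, 1 + i(r-1)\}$ of $r$ consecutive integers. Setting $\alpha(u_i u_{i+1}) = \alpha(v_i v_{i+1}) = 1 + i(r-1)$ makes the rail color simultaneously the maximum of $I_i$ and the minimum of $I_{i+1}$, so it fits into the intervals at both endpoints. The remaining colors of each $I_i$ are then distributed among the parallel rungs between $u_i$ and $v_i$: the $r-1$ corner rungs at $i=1$ receive $1, 2, \dots, r-1$, the $r-1$ corner rungs at $i=n$ receive $2 + (n-1)(r-1), \dots, 1 + n(r-1)$, and each interior batch of $r-2$ rungs receives the colors of $I_i$ lying strictly between its two incident rail colors. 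Parallel rungs at a vertex automatically pick up distinct colors, so $\alpha$ is a proper edge coloring; every color in $\{1, \dots, 1 + n(r-1)\}$ gets used; and by construction the color set at every vertex is $I_i$, i.e.\ an interval of $r$ consecutive integers. Thus $\alpha$ is the desired interval $(1 + n(r-1))$-coloring, and combining with Corollary~\ref{mycorollary3.1} gives $W(G) = 1 + n(r-1)$.

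I do not anticipate a genuine obstacle. The only mildly delicate bookkeeping step is choosing the corner rung multiplicity correctly: the two extremal colors $1$ and $1 + n(r-1)$ (which play the roles of the colors of the edges $e$ and $e'$ in the proof of Theorem~\ref{mytheorem4}) must both be realized at rungs, so the corner rungs need one more parallel copy than the interior rungs. Once that asymmetry is incorporated, the interval property propagates uniformly from $I_1$ to $I_n$ along the rails, and the verification reduces to routine checks. A quick sanity check in the degenerate case $r = 2$ is reassuring: there the thickened ladder collapses to the cycle $C_{2n}$ with $r-2 = 0$ interior rungs and one corner rung at each end, and the prescribed coloring recovers the well-known $W(C_{2n}) = n + 1$.
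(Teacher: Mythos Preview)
Your construction and coloring are exactly those of the paper: the same ``thickened ladder'' $G_{n,r}$ with $r-1$ parallel rungs at the two ends, $r-2$ parallel rungs in between, and rails $u_iu_{i+1},\,v_iv_{i+1}$, colored so that each rail receives $i(r-1)+1$ and the rungs at level $i$ fill in the remaining colors of your interval $I_i$. The argument is correct and essentially identical to the paper's proof, with the same appeal to Corollary~\ref{mycorollary3.1} for the matching upper bound.
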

\begin{proof} For the proof, we are going to construct a multigraph
$G_{n,r}$ that satisfies the specified conditions.
We define the multigraph $G_{n,r}$ ($n,r\geq 2$) as follows:

\begin{description}

\item[1)] $V\left(G_{n,r}\right)=\left\{u_{1},u_{2},\ldots,
u_{n},v_{1},v_{2},\ldots,v_{n}\right\}$,

\item[2)] $E\left(G_{n,r}\right)$ contains
$r-1$ parallel edges between the vertices $u_1$ and $v_1$,
and $r-1$ parallel edges between $u_n$ and $v_n$,
for $2\leq i\leq
n-1$, $u_i$ and $v_i$ are
joined by $r-2$ parallel edges, and 
for $1\leq j\leq
n-1$, $E(G_{n,r})$ contains
the edges $u_{j}u_{j+1},v_{j}v_{j+1}$.

\end{description}

$G_{n,r}$ is a $2$-connected multigraph with $\vert
V\left(G_{n,r}\right)\vert =2n$ and $\Delta\left(G_{n,r}\right)=r$.

Let us show that $G_{n,r}$ has an interval $(1+n(r-1))$-coloring.
We define an edge-coloring $\alpha$ of $G_{n,r}$ as
follows: first we color the edges from $E(u_{1}v_{1})$ with colors
$1,\ldots,r-1$ and from $E(u_{n}v_{n})$ with colors
$(n-1)(r-1)+2,\ldots,n(r-1)+1$; then we color the edges from
$E(u_{i}v_{i})$ with colors $(i-1)(r-1)+2,\ldots,i(r-1)$, where
$2\leq i\leq n-1$; finally we color the edges $u_{j}u_{j+1}$ and
$v_{j}v_{j+1}$ with color $j(r-1)+1$, where $1\leq j\leq n-1$. It is
straightforward to verify that $\alpha$ is an interval
$(1+n(r-1))$-coloring of $G_{n,r}$. Thus, $G_{n,r}\in \mathfrak{N}$
and $W\left(G_{n,r}\right)\geq 1+n(r-1)$. On the other hand, by
Corollary \ref{mycorollary3.1}, we have $W\left(G_{n,r}\right)\leq
1+n(r-1)$, so $W\left(G_{n,r}\right)= 1+n(r-1)$. 
\end{proof}
\bigskip

\section{Bounds on $w(G)$}\

In this section we prove some bounds on the mimimum number of colors
in an interval coloring of a multigraph $G$. Hanson and Loten
\cite{HansonLoten} proved a lower bound on the number of colors in
an interval coloring of an $(a,b)$-biregular graph. Here we give a
similar bound.

\begin{theorem}
\label{mytheorem4.1} If $G$ is a multigraph and $G\in
\mathfrak{N}$, then
\begin{center}
$w(G)\geq \left\lceil \frac{\vert V(G)\vert}{2\cdot
\alpha'(G)}\right\rceil\delta(G)$.
\end{center}
\end{theorem}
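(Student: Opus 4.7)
The plan is to give a vertex-covering argument: from an interval $w(G)$-coloring I will extract a small set of ``sample'' colors such that every vertex is incident to at least one edge whose color is a sample, and then a standard matching-size bound finishes the job.

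Concretely, let $\alpha$ be an interval $w(G)$-coloring of $G$, set $t=w(G)$ and $\delta=\delta(G)$, and consider the sample colors $c_i = i\delta$ for $1\le i\le s$, where $s = \lfloor t/\delta\rfloor$. Every vertex $v$ of $G$ has $d_G(v)\ge \delta$, so $S(v,\alpha)$ is an interval of at least $\delta$ consecutive integers contained in $\{1,\ldots,t\}$. Because any $\delta$ consecutive positive integers include a multiple of $\delta$, and every such multiple that lies in $\{1,\ldots,t\}$ is one of $c_1,\ldots,c_s$, we get $S(v,\alpha)\cap \{c_1,\ldots,c_s\}\ne\emptyset$. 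In other words, every vertex of $G$ is incident to some edge colored $c_i$.

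In a proper edge coloring each color class is a matching, so the edges colored $c_i$ cover at most $2\alpha'(G)$ vertices. Since the $s$ sample color classes together cover $V(G)$, we obtain
\begin{equation*}
\vert V(G)\vert \le \sum_{i=1}^{s} 2\alpha'(G) = 2\left\lfloor \frac{t}{\delta}\right\rfloor \alpha'(G).
\end{equation*}
Hence $\lfloor t/\delta\rfloor \ge \vert V(G)\vert/(2\alpha'(G))$, and since the left-hand side is an integer this strengthens to $\lfloor t/\delta\rfloor \ge \lceil \vert V(G)\vert/(2\alpha'(G))\rceil$. Multiplying through by $\delta$ yields $w(G)=t\ge \delta\cdot \lceil \vert V(G)\vert/(2\alpha'(G))\rceil$, which is the claimed bound.

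I do not expect a significant obstacle. The only delicate point is choosing the samples to lie on an arithmetic progression of common difference $\delta$; a naive edge-counting argument using $\vert E(G)\vert\le t\alpha'(G)$ and $2\vert E(G)\vert\ge \delta\vert V(G)\vert$ only gives $w(G)\ge \delta\vert V(G)\vert/(2\alpha'(G))$, which is strictly weaker than the claimed bound whenever the ceiling is strict. Placing the samples on $\delta, 2\delta, 3\delta,\ldots$ is precisely what lets us put the ceiling \emph{outside} the factor $\delta$, and this step essentially uses the interval property of $\alpha$.
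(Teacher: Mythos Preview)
Your proof is correct and is essentially the same as the paper's: both pick the sample colors $\delta,2\delta,\ldots$, observe that every vertex (having an interval of length at least $\delta$) is incident to an edge of some sample color, and then bound $|V(G)|$ by $2\alpha'(G)$ times the number of samples. Your $s=\lfloor t/\delta\rfloor$ coincides with the paper's $k$ (since no color class is empty), and your justification of the covering step is in fact more explicit than the paper's.
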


\begin{proof} Let $\delta=\delta(G)$. Consider an interval $w(G)$-coloring
$\alpha $ of $G$. In the coloring $\alpha$ of $G$, we consider the
edges with colors $\delta,2\delta,\ldots,k\delta$, where $k$ is the
maximum integer for which there exists an edge with color $k\delta$.
Clearly, $k\delta\leq w(G)$. Since each vertex $v$ of $G$ is
incident to at least one of the edges with color
$\delta,2\delta,\ldots,k\delta$, we have

\begin{center}
$\vert V(G)\vert\leq {\sum\limits_{i=1}^{k}2\vert
M_{\alpha}(i\cdot\delta)\vert}\leq 2k\cdot\alpha'(G)$,
\end{center}
where $M_{\alpha}(i\cdot\delta)=
\{e \in E(G) : \alpha(e)=i\cdot\delta\}$, $1\leq i\leq k$.

This implies that $k\geq \left\lceil \frac{\vert V(G)\vert}{2\cdot
\alpha'(G)}\right\rceil$. On the other hand, since $k\delta\leq w(G)$,
we obtain $w(G)\geq \left\lceil \frac{\vert V(G)\vert}{2\cdot
\alpha'(G)}\right\rceil\delta(G)$. 
\end{proof}

\begin{figure}[h]
\begin{center}
\includegraphics[width=20pc]{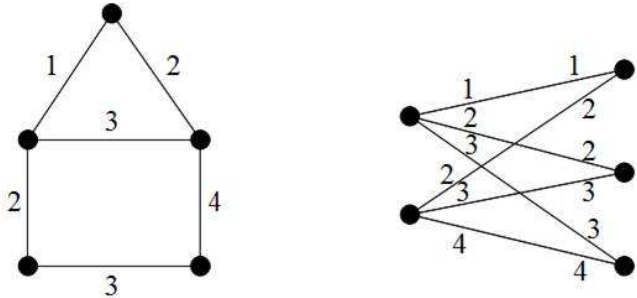}\\
\caption{Graphs with maximum degree $3$ without an interval
$3$-coloring.}\label{fig1}
\end{center}
\end{figure}

The graphs in Figure \ref{fig1} show that the lower bound in Theorem
\ref{mytheorem4.1} is sharp. Note also that there are infinite
families of graphs, such as all $(a-1,a)$-biregular graphs, 
for which the lower bound in
Theorem \ref{mytheorem4.1} is sharp.

\begin{corollary}
\label{mycorollary4.1} If a 
multigraph $G$ has no perfect
matching and $G\in \mathfrak{N}$, then
\begin{center}
$w(G)\geq \max\{\Delta(G),2\delta(G)\}$.
\end{center}
\end{corollary}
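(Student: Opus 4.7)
The plan is to verify the two parts of the maximum separately. The first part, $w(G) \geq \Delta(G)$, is essentially immediate: any interval coloring is a proper edge coloring, so $w(G)$ is at least the chromatic index $\chi'(G)$, and by Proposition \ref{proposition} we have $\chi'(G) = \Delta(G)$ for every $G \in \mathfrak{N}$. Hence $w(G) \geq \Delta(G)$ with no further work.

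For the second inequality $w(G) \geq 2\delta(G)$, I would invoke Theorem \ref{mytheorem4.1}. The key observation is that if $G$ has no perfect matching, then $\alpha'(G) < \frac{\vert V(G)\vert}{2}$, so $2\alpha'(G) \leq \vert V(G)\vert - 1$. This yields
\[
\frac{\vert V(G)\vert}{2\alpha'(G)} \;\geq\; \frac{\vert V(G)\vert}{\vert V(G)\vert - 1} \;>\; 1,
\]
and because this ratio is strictly greater than $1$, its ceiling is at least $2$. Plugging this into Theorem \ref{mytheorem4.1} gives
\[
w(G) \;\geq\; \left\lceil \frac{\vert V(G)\vert}{2\alpha'(G)}\right\rceil \delta(G) \;\geq\; 2\delta(G).
\]

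Combining the two bounds, $w(G) \geq \max\{\Delta(G),2\delta(G)\}$, which is the desired conclusion. There is no real obstacle here: the entire argument is a direct application of Proposition \ref{proposition} and Theorem \ref{mytheorem4.1}, with the only substantive step being the elementary inequality $\lceil \vert V(G)\vert/(2\alpha'(G))\rceil \geq 2$ coming from the absence of a perfect matching.
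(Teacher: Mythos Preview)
Your proof is correct and follows exactly the approach the paper intends: the corollary is stated without proof immediately after Theorem~\ref{mytheorem4.1}, so the implicit argument is precisely to observe that the absence of a perfect matching forces $\left\lceil |V(G)|/(2\alpha'(G))\right\rceil \geq 2$, while $w(G)\geq \Delta(G)$ is trivial. (One minor remark: for the $\Delta(G)$ bound you do not actually need Proposition~\ref{proposition}, since $w(G)\geq \chi'(G)\geq \Delta(G)$ holds for any proper edge coloring.)
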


Using similar counting arguments we can prove that Eulerian
multigraphs with an odd number of edges do not have interval
colorings. We first prove a more general theorem.

\begin{theorem}
\label{mytheorem4.2} If for a multigraph $G$, there exists a number
$d$ such that $d$ divides $d_{G}(v)$ for every $v\in V(G)$ and $d$
does not divide $\vert E(G)\vert$, then $G\notin \mathfrak{N}$.
\end{theorem}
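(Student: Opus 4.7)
The plan is to argue by contradiction via a residue-counting argument on colors modulo $d$. Suppose, towards a contradiction, that $G \in \mathfrak{N}$, and let $\alpha$ be an interval $t$-coloring of $G$. By the definition of an interval coloring, for every vertex $v$, the set $S(v,\alpha)$ is an interval of consecutive integers of length exactly $d_G(v)$. Since $d \mid d_G(v)$ by hypothesis, this interval has length divisible by $d$, and therefore contains exactly $d_G(v)/d$ integers in each residue class modulo $d$.

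Next I would exploit this residue-uniformity by double-counting. Fix an arbitrary residue $r \in \{0,1,\dots,d-1\}$, and let $N_r$ denote the number of edges of $G$ whose color under $\alpha$ is congruent to $r$ modulo $d$. For each vertex $v$, the number of edges incident to $v$ with color in residue class $r$ equals the number of elements of $S(v,\alpha)$ in that residue class, which is $d_G(v)/d$. Summing over all vertices and recalling that every edge is counted at each of its two endpoints, one obtains
\begin{equation*}
2 N_r \;=\; \sum_{v\in V(G)} \frac{d_G(v)}{d} \;=\; \frac{2\vert E(G)\vert}{d},
\end{equation*}
so $N_r = \vert E(G)\vert / d$.

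The contradiction now drops out immediately: $N_r$ is a non-negative integer by definition, so $d$ must divide $\vert E(G)\vert$, contrary to our hypothesis. Hence $G \notin \mathfrak{N}$. I do not expect any serious obstacle in this proof; the only subtlety is the initial observation that an interval of length divisible by $d$ is equidistributed across residues modulo $d$, which is elementary. As a direct consequence, taking $d = 2$ recovers the claim that every Eulerian multigraph with an odd number of edges fails to be interval colorable.
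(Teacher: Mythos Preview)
Your proof is correct and follows essentially the same approach as the paper's: both argue by contradiction via a Handshaking double-count of edges in a fixed residue class modulo $d$, obtaining $|E(G)|/d$ as an integer. The only cosmetic difference is that the paper fixes the residue class $0$ (edges whose color is a multiple of $d$), whereas you note the argument works for any residue class $r$.
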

\begin{proof}
Suppose, to the contrary, that $G$ has an interval $t$-coloring
$\alpha$ for some $t\geq \Delta(G)$. Let $d$ be a number such that $d$ divides $d_{G}(v)$ for
every $v\in V(G)$. We call an edge $e\in E(G)$ a \emph{$d$-edge} if $\alpha(e)=d\cdot l$ for some $l\in \mathbb{N}$.
Since $\alpha$ is an interval coloring, we have
that for any $v\in V(G)$, the set $S\left(v,\alpha\right)$ contains
exactly $\frac{d_{G}(v)}{d}$ $d$-edges. Now let $m_{d}$ be the
number of $d$-edges in $G$. By the Handshaking lemma, we obtain
$m_{d}=\frac{1}{2}\sum\limits_{v\in
V(G)}\frac{d_{G}(v)}{d}=\frac{\vert E(G)\vert}{d}$. Hence, $d$
divides $\vert E(G)\vert$, which is a contradiction. 
\end{proof}

\begin{corollary}
\label{mycorollary4.2} If $G$ is an Eulerian multigraph and $\vert E(G)\vert$ is odd, then $G\notin \mathfrak{N}$.
\end{corollary}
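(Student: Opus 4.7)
The plan is to derive this corollary as an immediate application of Theorem \ref{mytheorem4.2} with $d=2$. The statement requires me to verify exactly two hypotheses: that $2$ divides $d_G(v)$ for every vertex $v$, and that $2$ does not divide $|E(G)|$.

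First I would observe that since $G$ is Eulerian, every vertex $v \in V(G)$ has even degree $d_G(v)$. Thus $d=2$ divides $d_G(v)$ for every $v \in V(G)$. The second hypothesis, namely that $2 \nmid |E(G)|$, is just the assumption that $|E(G)|$ is odd. Consequently, Theorem \ref{mytheorem4.2} applies with $d=2$ and yields $G \notin \mathfrak{N}$.

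There is essentially no obstacle here: the corollary is the specialization of Theorem \ref{mytheorem4.2} to the smallest nontrivial value of $d$. The only thing worth remarking is that the definition of Eulerian used in the paper (a closed trail containing every edge) forces every vertex incident to at least one edge to have even degree, and isolated vertices cause no issue since they contribute $0$ to the degree sum and trivially satisfy $2 \mid 0$. So the verification of the hypothesis is uniform across the vertex set.
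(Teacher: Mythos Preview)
Your proposal is correct and matches the paper's intended derivation: the corollary is stated immediately after Theorem~\ref{mytheorem4.2} with no separate proof, precisely because it is the specialization to $d=2$. Your remark about isolated vertices is a harmless extra observation, but the core argument is exactly what the paper expects.
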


\begin{figure}[h]
\begin{center}
\includegraphics[width=20pc]{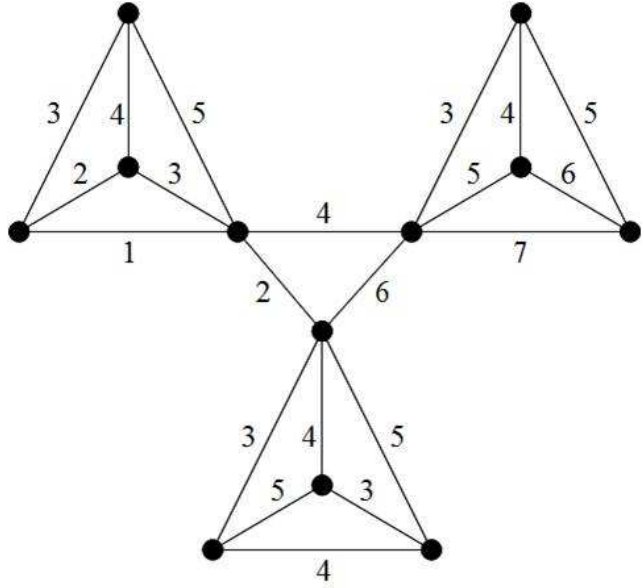}\\
\caption{An interval colorable connected odd graph $G$ with $\vert E(G)\vert-\frac{\vert V(G)\vert}{2}=15$ and $w(G)\geq
6$.}\label{fig2}
\end{center}
\end{figure}

Finally, we prove an analogue of Corollary \ref{mycorollary4.1} for odd multigraphs.

\begin{theorem}
\label{mytheorem4.3} If $G$ is an odd multigraph, $\vert
E(G)\vert-\frac{\vert V(G)\vert}{2}$ is odd and $G\in \mathfrak{N}$,
then
\begin{center}
$w(G)\geq \max\{\Delta(G),2\delta(G)\}$.
\end{center}
\end{theorem}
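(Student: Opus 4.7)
The inequality $w(G) \geq \Delta(G)$ is immediate, so the plan is to establish $w(G) \geq 2\delta(G)$ by contradiction. Write $\delta = \delta(G)$ and suppose there is an interval $t$-coloring $\alpha$ of $G$ with $t < 2\delta$. For each vertex $v$, the set $S(v,\alpha)$ is an interval of length $d_G(v) \geq \delta$ contained in $\{1,\dots,t\} \subseteq \{1,\dots,2\delta-1\}$. If $\delta \notin S(v,\alpha)$ then either $\overline S(v,\alpha) < \delta$, forcing $d_G(v) < \delta$, or $\underline S(v,\alpha) > \delta$, forcing $\overline S(v,\alpha) \geq \delta + d_G(v) \geq 2\delta > t$. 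Both possibilities are ruled out, so color $\delta$ appears at every vertex and the $\delta$-colored edges form a perfect matching $M$ of $G$.

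I would next consider the multigraph $G' = G - M$. Since every degree in $G$ is odd, every degree in $G'$ is even; and $|E(G')| = |E(G)| - |V(G)|/2$ is odd by hypothesis. If $G' \in \mathfrak{N}$, then Theorem \ref{mytheorem4.2} applied with $d = 2$ yields an immediate contradiction, so it suffices to exhibit an interval coloring of $G'$.

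To this end, I would define $\beta$ on $E(G')$ by $\beta(e) = \alpha(e)$ when $\alpha(e) < \delta$ and $\beta(e) = \alpha(e) - 1$ when $\alpha(e) > \delta$. For each vertex $v$, removing the $\delta$-colored edge from $S(v,\alpha)$ splits it into $[\underline S(v,\alpha),\delta-1] \cup [\delta+1,\overline S(v,\alpha)]$, and the shift transforms this into the single interval $[\underline S(v,\alpha), \overline S(v,\alpha) - 1]$ of length $d_{G'}(v)$. A routine check confirms that $\beta$ is a proper edge coloring of $G'$ using every color in $\{1,\dots,t-1\}$, so $G' \in \mathfrak{N}$, and Theorem \ref{mytheorem4.2} applies as advertised.

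The only delicate point is the opening observation that $t \leq 2\delta - 1$ forces color $\delta$ to occur at every vertex: this is precisely what upgrades the color class of $\delta$ from a matching to a \emph{perfect} matching, and without it the shifted coloring $\beta$ would not be defined on an even multigraph. The parity hypothesis on $|E(G)| - |V(G)|/2$ then plays its role only at the very end, via Theorem \ref{mytheorem4.2}.
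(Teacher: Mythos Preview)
Your proof is correct and follows essentially the same approach as the paper: assume $t\le 2\delta-1$, observe that color~$\delta$ must appear at every vertex and hence induces a perfect matching~$M$, shift the coloring on $G-M$ to obtain an interval coloring of an even multigraph with an odd number of edges, and derive a contradiction. Your write-up is in fact slightly more streamlined than the paper's: you dispense with the separate ``$G$ has no perfect matching'' case (since your argument produces the perfect matching directly), and you invoke Theorem~\ref{mytheorem4.2} with $d=2$ on $G-M$ itself rather than passing to an Eulerian component and using Corollary~\ref{mycorollary4.2}.
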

\begin{proof} Let $\delta=\delta(G)$. If $G$ has no perfect matching, then the result
follows from Corollary \ref{mycorollary4.1}. Assume that $G$ has a
perfect matching. Next suppose, to the contrary, that $G$ has an
interval $t$-coloring $\alpha$ for some $t\leq 2\delta-1$. Since for
every $v\in V(G)$, $1\leq \underline S\left(v,\alpha \right)\leq
\delta$, we obtain that $\delta\in \bigcap\limits_{v\in V(G)}
S\left(v,\alpha\right)$. This implies that the edges with color
$\delta$ form a perfect matching in $G$. Let $M$ be this perfect
matching. Consider the multigraph $G-M$. We define an edge-coloring
$\beta$ of $G-M$ as follows: for every $e\in E(G-M)$, let

\begin{center}
$\beta\left(e\right)=\left\{
\begin{tabular}{ll}
$\alpha(e)$, & if $1\leq \alpha(e)\leq \delta -1$,\\
$\alpha(e)-1$, & if $\delta+1\leq \alpha(e)\leq t$.\\
\end{tabular}%
\right.$
\end{center}

It is not difficult to see that $\beta$ is an interval
$(t-1)$-coloring of $G-M$. Since $\vert E(G)\vert-\frac{\vert
V(G)\vert}{2}$ is odd, we obtain that $G-M$ is an even multigraph
with an odd number of edges. This implies that $G-M$ has an Eulerian
component with an odd number of edges which is interval colorable,
but this contradicts Corollary \ref{mycorollary4.2}. 
\end{proof}

The graph in Figure \ref{fig2} shows that the lower bound in the
preceding theorem is sharp.

\bigskip

As we have seen, there are several lower bounds on $w(G)$ for different families of interval colorable graphs. In particular,
since a complete bipartite graph $K_{a,b}$ requires at least $a+b-\gcd(a,b)$ colors for an interval coloring
\cite{KamalianPreprint,KamalianThesis,Hansen}, it holds that for any positive integer $d$, there is a graph $G$ such that $G\in
\mathfrak{N}$ and $w(G) - \chi'(G)\geq d$. It is not known if a similar result holds for cyclic interval colorings. We would like to suggest the following.

\begin{problem}
    For any positive integer $d$, is there a graph $G$ such that $G\in \mathfrak{N}_{c}$ and $w_c(G)-\chi^{\prime}(G)\geq d$?
\end{problem}

Even the case $d=1$ of the above problem is open. The problem has a positive answer if $d=1$ and $\chi^{\prime}(G)=\Delta(G)$
\cite{PetrosyanMkhit}. Another variant of the problem is obtained by replacing $\chi'(G)$ by $\Delta(G)$. The answer to this latter
problem is positive for multigraphs, but it is open for $d\geq 2$ in the case of graphs. For the case $d=1$, this reformulated problem has a positive answer for ordinary graphs (see e.g.
\cite{PetrosyanMkhit,AsratianCasselgrenPetrosyan}).


\end{document}